\documentclass[a4paper,fleqn]{cas-sc}

\usepackage[numbers]{natbib}

\usepackage{amsthm}
\newtheoremstyle{thmstyleone}{\topsep}{\topsep}{\itshape}{}{\bfseries}{.}{.5em}{}
\newtheoremstyle{thmstyletwo}{\topsep}{\topsep}{\normalfont}{}{\itshape}{.}{.5em}{}
\newtheoremstyle{thmstylethree}{\topsep}{\topsep}{\normalfont}{}{\bfseries}{.}{.5em}{}

\usepackage{hyperref}
\usepackage{graphicx}
\usepackage{multirow}
\usepackage{amsmath,amssymb,amsfonts}
\usepackage{xcolor}
\usepackage{booktabs}
\usepackage{algorithm}
\usepackage{algorithmicx}
\usepackage{algpseudocode}
\usepackage{subcaption}
\usepackage{array}
\newcolumntype{L}[1]{>{\raggedright\arraybackslash}m{#1}}

\newcounter{model}
\newenvironment{model}{%
  \refstepcounter{model}%
  \par\smallskip\noindent\textbf{Model~\themodel.}\par\nobreak
  \setlength{\abovedisplayskip}{2pt plus 1pt minus 1pt}%
  \setlength{\abovedisplayshortskip}{2pt plus 1pt minus 1pt}%
}{%
  \par\smallskip
}
\usepackage{cleveref}
\crefname{model}{Model}{Models}
\usepackage{tabularx}
\usepackage{makecell}

\makeatletter
\renewcommand\thead{\@ifstar{\let\tabg@pe\gape\thead@}{\let\tabg@pe\theadgape\thead@}}
\makeatother

\theoremstyle{thmstyleone}
\newtheorem{theorem}{Theorem}
\newtheorem{proposition}[theorem]{Proposition}

\theoremstyle{thmstyletwo}

\theoremstyle{thmstylethree}

\newcommand{\C}{\mathcal{C}}

\newcommand{\K}{\mathcal{K}}
\newcommand{\Pall}{\mathcal{P}^{\mathrm{all}}}
\newcommand{\Fone}{\mathcal{F}_1}
\newcommand{\Ftwo}{\mathcal{F}_2}
\newcommand{\proj}{\operatorname{proj}}

\newcommand{\confirm}[1]{\textcolor{red}{#1}}

\begin{document}
\let\WriteBookmarks\relax
\def\floatpagepagefraction{1}
\def\textpagefraction{.001}

\shorttitle{Non-Contiguous Channel Allocation in Multi-Beam Satellites}

\shortauthors{A. Gaudry and V. Mak-Hau}

\title [mode = title]{Non-Contiguous Channel Allocation in Multi-Beam Satellites: Formulations, Upper Bounds and Cooperative Matheuristics}

\author[1]{Angus Gaudry}[orcid=0000-0002-5421-8852]
\cormark[1]
\ead{agaudry@deakin.edu.au}
\credit{Conceptualization, Methodology, Software, Validation, Formal analysis, Investigation, Data curation, Visualization, Writing -- original draft}

\author[1]{Vicky Mak-Hau}[orcid=0000-0002-9306-5780]
\ead{vicky.mak@deakin.edu.au}
\credit{Methodology, Supervision, Writing -- review \& editing}

\affiliation[1]{organization={School of Information Technology, Deakin University},
            city={Waurn Ponds},
            state={VIC},
            country={Australia}}

\cortext[1]{Corresponding author}

\begin{abstract}
Demand for wireless communication is growing, and commercial satellite services are expanding with it, increasing pressure on a finite radio spectrum. Allocation efficiency directly limits how many users a satellite can serve. We study how to assign frequency sub-channels to the carriers of a multi-beam satellite, maximising the total priority of admitted carriers while keeping each within its interference tolerance. Earlier work on this interference-limited channel allocation problem placed each carrier in a single continuous frequency block. Removing this restriction can admit more carriers, but a direct integer programming model cannot solve the largest test problems in practical time, so new methods are needed. We prove the problem computationally hard, reformulate it around how carriers share sub-channels, and derive three bounds on the optimum. We develop heuristics, including matheuristics combining local search with optimisation models, and an orchestrated portfolio that schedules them, recombines their solutions and stops early once its result is proven optimal or no longer improves. On 24 problems with up to 500 carriers, the best heuristics come within about 1\% of the best known solution on average. The portfolio matches or exceeds the best heuristic for each problem on 23 of 24 with equal weights and 22 with priority weights, and is up to about 5\% better on larger, more congested problems. It requires about a quarter of the time to run every heuristic and keep the best, and under half that of its heuristics run in a fixed order, which return slightly worse solutions on average.
\end{abstract}

\begin{highlights}
\item Allowing non-contiguous channels admits up to 7\% more carriers than contiguous plans
\item Three new upper bounds prove optimality on 6 instances where carriers must be refused
\item A portfolio of heuristics shares sub-channel patterns that an exact model recombines
\item The portfolio matches or beats each heuristic run alone on 23 of 24 instances
\item The portfolio needs about a quarter of the time taken to run every heuristic
\end{highlights}

\begin{keywords}
Integer Programming \sep Matheuristics \sep Algorithm Portfolio \sep Large Neighbourhood Search \sep Satellite Communication \sep Channel allocation
\end{keywords}

\maketitle

\section{Introduction}\label{sec1}

Modern multi-beam satellite systems must allocate limited spectrum among a large number of carriers while controlling co-channel interference. Resource-allocation decisions span beam configuration, power and bandwidth allocation, channelisation, and physical-layer scheduling, and these decisions are often coupled \cite{Chen2025,Gaudry2023,Li2024}. This paper focuses on the channelisation and carrier-admission stage. We assume that beam laydown, carrier-to-beam assignment, bandwidth requirements, modulation and coding schemes, and transmit powers have already been determined. The remaining decision is which carriers can be admitted and how their required bandwidth should be placed within the shared frequency band.

The available bandwidth is discretised into a common set of sub-channels accessible to all beams through digital channelisation. Each carrier has a prescribed bandwidth requirement, an interference tolerance determined by its signal-to-noise-plus-interference ratio (SNIR) requirement, an assigned beam, and a priority weight. An admitted carrier must receive its full bandwidth requirement. Carriers assigned to the same beam cannot share sub-channels, whereas carriers on different beams may do so. Such sharing induces interference, and the interference received by a carrier accumulates over all other carriers with which it overlaps. Moreover, interference is generally asymmetric due to different antenna gains of the beams and distance from users to the radiating antenna. The optimisation objective is therefore to maximise the total priority weight of admitted carriers subject to bandwidth, intra-beam disjointness, and cumulative-interference. 

This problem differs fundamentally from the classical frequency or channel allocation problem. Traditional frequency-assignment models commonly represent interference through a conflict graph, with pairwise separation constraints imposed between incompatible transmitters \cite{Hale1980,Aardal2007,Ahmed2016}. Such a representation is insufficient when feasibility depends on the sum of interference generated by several simultaneously active carriers. Under a physical interference model, carriers can be pairwise compatible while becoming infeasible when considered together \cite{Gupta2000,Halldorsson2009}. Mathematical-programming approaches incorporating cumulative interference are less common. \citet{Kiatmanaroj2012,Kiatmanaroj2016}, for example, study satellite frequency-assignment and scheduling problems with cumulative interference, while \citet{Palpant2008} and \citet{Nicholas2019} develop optimisation and constraint-programming methods for related terrestrial problems. However, existing approaches either employ representations based on contiguous frequency intervals, simplify cumulative interference through pairwise conditions, or address objectives different from carrier admission under flexible digital channelisation. 

Two recent studies address the same underlying problem of assigning frequency sub-channels to carriers under cumulative co-channel interference, with carrier-to-beam assignments and bandwidth requirements fixed upstream. \citet{Li2025} develop the hybrid heuristic--integer programming methods ALG-MUSIC and ALG-BEAT for carrier admission and bandwidth utilisation. \citet{Gaudry2026} subsequently study the same interference-limited channel allocation problem using mathematical programming, constraint programming and FLIP-based heuristics. Both works require each carrier to occupy a contiguous interval of sub-channels, represented by a starting position and an interval length. 

This work removes this contiguity requirement, so an admitted carrier may occupy any set of sub-channels of the required cardinality. Every contiguous solution remains feasible, while fragmented allocations that cannot be represented by a single interval become available. Non-contiguous allocation can therefore admit combinations of carriers that no contiguous assignment can realise. Second, the solution representation used in~\cite{Gaudry2026} no longer applies, because a single starting position cannot describe an arbitrary subset of sub-channels. The solution methodology must consequently be redesigned, since a minor modification of the existing algorithms does not suffice.

Removing contiguity also exposes useful combinatorial structure. Since individual sub-channel identities no longer matter, a feasible allocation can be described by the number of sub-channels assigned to each occupancy pattern, where a pattern identifies the set of carriers simultaneously occupying one sub-channel. This permits a Dantzig--Wolfe-style aggregation \cite{DantzigWolfe1960}, closely related to the classical pattern-based formulation of the cutting-stock problem introduced by Gilmore and Gomory \cite{GilmoreGomory1961}. Pattern-based formulations have proved effective in large-scale packing and scheduling problems \cite{Barnhart1998,Desaulniers2005,SadykovVanderbeck2013,Muritiba2010}, but the present setting has an important distinction. A pattern need only satisfy intra-beam disjointness. Cumulative interference feasibility is determined by the complete multiset of selected patterns. A pattern that is individually admissible can therefore participate in an infeasible combination. In contrast, in many packing-with-conflicts or scheduling formulations, feasibility can be certified column by column. \citet{Bakshi2017}, for example, construct columns representing sets of links that can transmit simultaneously under cumulative interference, so each generated column is itself feasible. Here, interference remains coupled across pattern multiplicities and must be enforced in the master problem.

The satellite resource-allocation literature provides additional motivation for this decomposition. Existing work has considered joint power and bandwidth allocation using metaheuristics, mathematical programming and reinforcement learning \cite{luis2020artificial,pachler2020allocating,efrem2020,Jia2019,OrtizGomez2022}, as well as hierarchical approaches that fix one class of decisions before optimising another \cite{Abdu2021,Ramirez2022}. Aravanis et al. \citet{Aravanis2016} exploit pairwise interference structure to reduce a return-link scheduling problem to maximum weight independent set on a perfect graph, which is solvable in polynomial time. Beam laydown and resource allocation have similarly been treated through hybrid optimisation and column-generation approaches \cite{Gaudry2023,Li2024}, while beam-hopping models allocate capacity in the time domain \cite{lei2020beam,Gaudry2024}. These studies primarily allocate capacity or configure beams. They do not address the admission of carriers under a fixed upstream allocation when arbitrary non-contiguous channelisation and cumulative asymmetric interference are both present.

The resulting combinatorial optimisation problem is computationally demanding. A direct mixed-integer formulation requires variables describing carrier--sub-channel allocations together with the resulting pairwise overlaps, producing a large and weak formulation as problem size grows. At the same time, no single heuristic mechanism we have tested performs uniformly well across interference regimes. These considerations motivate a cooperative hybrid framework designed to exploit the complementary strengths of a portfolio of solution approaches, combining heuristic, integer programming, and constraint programming. Hybrid optimisation methods such as large neighbourhood search \cite{Shaw1998,PisingerRopke2010} and construct--merge--solve--adapt \cite{Blum2016} show how exact optimisation can be embedded within heuristic search, while restricted-master heuristics exploit a limited pool of generated columns without requiring full branch-and-price \cite{Joncour2010,Sadykov2019,MakHau2021}. Algorithm portfolios traditionally select or schedule among alternative methods \cite{Huberman1997,GomesSelman2001,Rice1976,Kotthoff2014}. Cooperative search instead allows them to exchange information \cite{CrainicToulouse2008,Ouelhadj2010}. We exploit this idea by having several solution generators contribute occupancy patterns to a shared pool. An exact pattern-selection model then recombines patterns contributed by different methods, potentially producing a better solution than any individual method found on its own.
This portfolio structure is particularly suitable when a fixed computation budget is available, because it maintains a feasible incumbent throughout and returns the best solution found when the budget is exhausted. 

The channelisation decision is operationally suited to an anytime approach. Reconfiguring the frequency plan requires commanding the digital channeliser and retuning the corresponding ground terminals \cite{Channelizer2018}. Hence, channelisation can be optimised over a substantially longer time horizon than physical-layer scheduling. The relevant practical objective is therefore to obtain a high-quality feasible allocation reliably within the available computation time, while retaining information on solution quality through valid upper bounds. 

The main contributions of this paper are as follows.

\begin{itemize}

    \item \textbf{Non-contiguous interference-limited carrier admission.}
    We formulate carrier admission with heterogeneous bandwidth requirements, cumulative asymmetric interference, interference budget, priority-weighted admission, and arbitrary non-contiguous sub-channel allocations. We also establish the computational complexity of the resulting problem.

    \item \textbf{Pattern-based reformulation.}
    We exploit the interchangeability of sub-channels under non-contiguous allocation to reformulate the problem in terms of occupancy-pattern multiplicities. We establish the relationship between the direct and pattern formulations and show that restricting the model to a subset of patterns still yields feasible solutions to the original problem, with cumulative interference enforced across the selected patterns in the master problem. 

    \item \textbf{Upper bounding methods.}
    We develop three practical upper-bounding devices, including a minimum-overlap maximum-tolerance bound, a beam-aggregation relaxation, and a reduced-cost bound based on the pattern formulation. These provide valid optimality information even when the complete pattern set cannot be generated.

    \item \textbf{Specialised matheuristics and cooperative recombination.}
    We develop a large-neighbourhood search tailored to the cumulative-interference structure and combine it with column-generation and existing FLIP-based procedures \cite{Gaudry2026}. The portfolio pools the patterns the generators discover and solves an exact restricted pattern-selection model to recombine them.

    \item \textbf{Computational evaluation.}
    We evaluate the proposed methods on realistic and synthetic multi-beam satellite instances under both uniform and priority-weighted objectives. The experiments assess the individual generators, the orchestration rules, the upper-bounding mechanisms and the overall portfolio relative to direct optimisation, and record which component produces each returned incumbent.

\end{itemize}

The remainder of the paper is organised as follows. Section~\ref{sec:formulation} defines the problem and develops the direct and pattern-based formulations together with the upper bounds. Sections~\ref{sec:framework}, ~\ref{sec:implementation} and~\ref{sec:generator-details} present the cooperative solution framework and its principal solution generators. Section~\ref{sec:experiments} reports the computational experiments, and the final section summarises the main conclusions and directions for future work. 

\section{Problem Formulation} \label{sec:formulation}
\subsection{Notation}

Let $\mathcal{C}$ denote the set of carriers, $\mathcal{B}$ the set of beams, and $\mathcal{K} = \{1,\dots,S\}$ the set of $S$ sub-channels shared by every beam. Each carrier $i \in \mathcal{C}$ is assigned upstream to a single beam $\beta(i) \in \mathcal{B}$, requires $\Psi_i$ sub-channels, and carries a priority weight $w_i \in \mathbb{Z}_{>0}$. Integral weights make every feasible objective value integral, which the certificates of Section~\ref{sec:repository} use. We write $\mathcal{C}_b = \{i \in \mathcal{C} : \beta(i) = b\}$ for the carriers of beam $b$.

Interference is described by a non-negative matrix $\Lambda$, where $\Lambda_{ij}$ is the interference power received by carrier $i$ from carrier $j$ per sub-channel the two hold in common. The matrix is asymmetric, $\Lambda_{ij} \neq \Lambda_{ji}$ in general, with $\Lambda_{ii} = 0$ and $\Lambda_{ij} = 0$ whenever $\beta(i) = \beta(j)$, since carriers sharing a beam never share a sub-channel. Each carrier tolerates a total received interference of at most $\Gamma_i \ge 0$, derived upstream from its transmit power and the SNIR its modulation and coding scheme requires. We assume throughout that the upstream allocation is feasible in bandwidth, that is $\sum_{i \in \mathcal{C}_b} \Psi_i \leq S$ for every $b \in \mathcal{B}$, so that beam disjointness alone never prevents a carrier from being served.

\subsection{A direct formulation}

The direct formulation is presented in \Cref{mod:direct}. Carrier allocation is described by binary variables $x_i$, equal to one if carrier $i$ is activated and zero otherwise, and $y_{ik}$, equal to one if carrier $i$ occupies sub-channel $k$ and zero otherwise. We form a set for all interfering pairs $\mathcal{I} = \{\{i,j\} : i \neq j, \; \Lambda_{ij} > 0 \text{ or } \Lambda_{ji} > 0\}$, and introduce a variable $u_{ijk} \geq 0$ for each pair $\{i,j\} \in \mathcal{I}$ and sub-channel $k$. 

\begin{model}\label{mod:direct}
\begin{align}
\max_{x,y,u} \quad & \sum_{i \in \mathcal{C}} w_i\, x_i \label{eq:obj}\\
\text{s.t.} \quad
& \sum_{k \in \mathcal{K}} y_{ik} \;=\; \Psi_i\, x_i
  & \forall i \in \mathcal{C} \label{eq:demand}\\
& \sum_{i \in \mathcal{C}_b} y_{ik} \;\le\; 1
  & \forall b \in \mathcal{B},\; k \in \mathcal{K} \label{eq:beam}\\
& u_{ijk} \;\ge\; y_{ik} + y_{jk} - 1
  & \forall \{i,j\} \in \mathcal{I},\; k \in \mathcal{K} \label{eq:mcc-lo}\\
& u_{ijk} \;\le\; y_{ik}
  & \forall \{i,j\} \in \mathcal{I},\; k \in \mathcal{K} \label{eq:mcc-i}\\
& u_{ijk} \;\le\; y_{jk}
  & \forall \{i,j\} \in \mathcal{I},\; k \in \mathcal{K} \label{eq:mcc-j}\\
& \sum_{k \in \mathcal{K}} \sum_{j \,:\, \{i,j\} \in \mathcal{I}} \Lambda_{ij}\, u_{ijk} \;\le\; \Gamma_i
  & \forall i \in \mathcal{C} \label{eq:interf}\\
& x_i,\, y_{ik} \in \{0,1\}, \quad u_{ijk} \ge 0. \notag
\end{align}
\end{model}

Constraint~\eqref{eq:demand} ensures a carrier receives its full requirement of $\Psi_i$ sub-channels or none at all, so a partially served carrier does not contribute to ~\eqref{eq:obj}. Constraint~\eqref{eq:beam} enforces beam disjointness. Constraint~\eqref{eq:interf} bounds the interference carrier $i$ receives, summed over every sub-channel it occupies and every active carrier it shares one with. Constraints~\eqref{eq:mcc-lo}--\eqref{eq:mcc-j} are the McCormick envelopes of the product $y_{ik}y_{jk}$, and force $u_{ijk} = y_{ik}y_{jk}$ at every integral $y$.

The formulation nonetheless scales poorly, for two reasons. It carries $O(|\mathcal{I}|\,S)$ auxiliary variables and three times as many constraints, and its linear relaxation is weak. At a fractional point with $y_{ik} = y_{jk} = \tfrac{1}{2}$ the envelopes permit $u_{ijk} = 0$, so the relaxation admits two carriers sharing a sub-channel at no interference cost, and enforces no interference limit. 
Both limitations motivate the reformulation that follows, which carries no bilinear terms to linearise. 

\Cref{mod:direct} imposes no adjacency requirement on the $\Psi_i$ sub-channels assigned to carrier $i$. The contiguous variant studied in \cite{Gaudry2026} introduces, for each carrier $i$, a start position $\tau_i \in \{1, \dots, S - \Psi_i + 1\}$ and replaces~\eqref{eq:demand} with $y_{ik} = x_i$ for $\tau_i \le k \le \tau_i + \Psi_i - 1$ and $y_{ik} = 0$ otherwise. Every contiguous allocation satisfies~\eqref{eq:demand}, so the variant is a restriction of \Cref{mod:direct}.

Constraint~\eqref{eq:interf} distinguishes the problem from the frequency assignment models discussed in Section~\ref{sec1} in two respects. First, it sums the interference carrier~$i$ receives over all other active carriers, so feasibility depends on the activated set as a whole and in general cannot be represented by a conflict graph. Second, $\Lambda$ is asymmetric, so the pairwise relation it induces is directed, whereas the conflict graphs of frequency assignment are undirected.

\subsection{Complexity}

We refer to the problem of solving \Cref{mod:direct} as carrier admission. It is NP-hard, even under uniform priority weights. Given an arbitrary graph $G = (V,E)$ and any band size $S \ge 1$, construct an instance with $\mathcal{C} = V$, one beam per carrier, $\Psi_i = S$, $w_i = 1$ and $\Gamma_i = 0$ for all $i$, and $\Lambda_{ij} = 1$ if $\{i,j\} \in E$ and zero otherwise. Every activated carrier then occupies the whole band, so~\eqref{eq:interf} is satisfied exactly when no two activated carriers are adjacent in $G$. The feasible activated sets are then precisely the independent sets of $G$, and the optimal value of~\eqref{eq:obj} is the maximum cardinality. Maximum independent set therefore reduces to carrier admission, which inherits its NP-hardness. The reduction is a worst-case construction, not a claim about the instances an operational system presents.

\subsection{Pattern-based reformulation}\label{subsec: pattern formulation}

A \emph{pattern} $p \subseteq \mathcal{C}$ is a beam-disjoint set of carriers, that is, one containing at most one carrier from each beam. A pattern describes the carrier occupancy of a single sub-channel. Let $\mathcal{P}$ denote a set of patterns and, for each carrier $i$, let $\mathcal{P}_i = \{p \in \mathcal{P} : i \in p\}$ be those containing $i$. For $p \in \mathcal{P}_i$ write
\[
q_{ip} \;=\; \sum_{j \in p} \Lambda_{ij},
\]
the interference carrier $i$ receives from each sub-channel carrying pattern $p$.

Because contiguity has been relaxed, sub-channels are interchangeable, and a solution is determined by how many sub-channels carry each pattern, not by which ones do. Letting $z_p \in \mathbb{Z}_+$ count the sub-channels carrying pattern $p$, and $x_i \in \{0,1\}$ indicate activation as before, the pattern selection problem is

\begin{model}\label{mod:pattern}
\begin{align}
\max_{x,z} \quad & \sum_{i \in \mathcal{C}} w_i\, x_i \label{eq:pat-obj}\\
\text{s.t.} \quad
& \sum_{p \in \mathcal{P}_i} z_p \;\ge\; \Psi_i\, x_i
  & \forall i \in \mathcal{C} \label{eq:pat-demand}\\
& \sum_{p \in \mathcal{P}} z_p \;\le\; S \label{eq:pat-slot}\\
& \sum_{p \in \mathcal{P}_i} q_{ip}\, z_p \;\le\; \Gamma_i
  & \forall i \in \mathcal{C} \label{eq:pat-interf}\\
& z_p \in \mathbb{Z}_+, \quad x_i \in \{0,1\}. \notag
\end{align}
\end{model}
Constraint~\eqref{eq:pat-demand} is an inequality, not an equality, which permits a carrier to be allocated more sub-channels than it requires. This never improves the objective, and when $\mathcal{P}$ contains every beam-disjoint pattern any such solution is dominated by one that removes the surplus occurrences. Permitting it matters when $\mathcal{P}$ is smaller, where the patterns needed to remove the surplus may be unavailable (such as at the beginning of a restricted master). 

\subsubsection{Projection Equivalence of the Direct and Pattern Formulations}

In \Cref{mod:direct}, $x_i \in \{0,1\}$ indicates whether carrier~$i$ is activated and $y_{ik} \in \{0,1\}$ whether it occupies sub-channel~$k$. Because~\eqref{eq:mcc-lo}--\eqref{eq:mcc-j} force $u_{ijk} = y_{ik}y_{jk}$ at every integral $y$, and $\Lambda_{ij} = 0$ for every pair outside $\mathcal{I}$, constraint~\eqref{eq:interf} is equivalent to
\begin{equation}
\sum_{k \in \K} \sum_{j \in \C} \Lambda_{ij}\, y_{ik} y_{jk} \le \Gamma_i \qquad \forall i \in \C. \label{eq:direct-interference}
\end{equation}
Let $\Fone$ denote the set of pairs $(x,y)$ satisfying~\eqref{eq:demand}, \eqref{eq:beam} and~\eqref{eq:direct-interference}, which is the projection of the feasible set of \Cref{mod:direct} onto $(x,y)$.

Let $\Pall$ denote the set of all patterns, including the empty pattern, and $\Pall_i = \{p \in \Pall : i \in p\}$. For any set of patterns $\mathcal{P}$, let $\Ftwo(\mathcal{P})$ denote the set of pairs $(x,z)$ feasible for \Cref{mod:pattern} over $\mathcal{P}$.

\begin{proposition}[Projection equivalence]\label{prop:correspondence}
Assume $\Lambda_{ij}\ge 0$ for all $i,j\in\C$, $\Lambda_{ii}=0$,
$\Gamma_i\ge 0$ for all $i\in\C$, and $\K=\{1,\ldots,S\}$. Then 
\begin{equation}
        \proj_x(\Fone)
    =
    \proj_x\bigl(\Ftwo(\Pall)\bigr).
\end{equation} 
As both formulations have the same objective function (
$
   \max \sum_{i\in\C} w_i x_i
$), 
they have the same optimal objective value.
\end{proposition}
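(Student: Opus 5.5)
We prove the two inclusions separately and then read off the equality of optimal values. Both directions rest on one bookkeeping device. A direct solution $y$ induces, for each sub-channel $k$, its occupancy set $p(k)=\{i\in\C : y_{ik}=1\}$. Conversely, a multiset of patterns can be laid out on distinct sub-channels.

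For $\proj_x(\Fone)\subseteq\proj_x(\Ftwo(\Pall))$, take $(x,y)\in\Fone$. By~\eqref{eq:beam}, each $p(k)$ contains at most one carrier per beam, so $p(k)\in\Pall$; this may be the empty pattern. Set $z_p=|\{k\in\K : p(k)=p\}|$.
\begin{itemize}
\item Constraint~\eqref{eq:pat-slot} holds with equality, since every sub-channel is counted exactly once.
\item For~\eqref{eq:pat-demand}, $\sum_{p\in\Pall_i}z_p=|\{k: y_{ik}=1\}|=\Psi_i x_i$ by~\eqref{eq:demand}.
\item For~\eqref{eq:pat-interf}, regroup the left side of~\eqref{eq:direct-interference} by sub-channel pattern. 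For $k$ with $y_{ik}=1$ we have $\sum_j\Lambda_{ij}y_{jk}=\sum_{j\in p(k)}\Lambda_{ij}=q_{ip(k)}$, using $\Lambda_{ii}=0$ so that including $i$ itself in $p(k)$ is harmless. Summing over $k$ gives $\sum_{p\in\Pall_i}q_{ip}z_p\le\Gamma_i$.
\end{itemize}
Thus $(x,z)\in\Ftwo(\Pall)$ with the same $x$.

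For the reverse inclusion, take $(x,z)\in\Ftwo(\Pall)$. The complication is that~\eqref{eq:pat-demand} is an inequality, so a carrier may be covered more than $\Psi_i$ times, or covered at all when $x_i=0$. First I would normalise $z$ into $z'$ with $\sum_{p\ni i}z'_p=\Psi_i x_i$ exactly. Repeatedly pick a carrier $i$ that is over-covered, meaning $x_i=0$ and covered, or $x_i=1$ and covered more than $\Psi_i$ times. Choose one occurrence of a pattern $p\ni i$ and replace it by $p\setminus\{i\}$, which is still beam-disjoint and lies in $\Pall$ because $\Pall$ is closed under subsets, including $\emptyset$. This step:
\begin{itemize}
\item keeps $\sum_p z_p$ unchanged;
\item lowers $i$'s coverage by one and leaves every other carrier's coverage unchanged;
\item does not increase any carrier's interference, since $q_{jp\setminus\{i\}}=q_{jp}-\Lambda_{ji}\le q_{jp}$ by nonnegativity of $\Lambda$, and carrier $i$ loses a nonnegative term.
\end{itemize}
Total coverage strictly decreases, so the process terminates. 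Every active carrier's coverage stays at least $\Psi_i$, so it stops at exact coverage.

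Next, lay the occurrences of $z'$ out on distinct sub-channels. There are at most $S$ of them by~\eqref{eq:pat-slot}; fill any remaining sub-channels with $\emptyset$. Define $y_{ik}=1$ iff $i$ belongs to the pattern placed on $k$. Then:
\begin{itemize}
\item \eqref{eq:demand} follows from exact coverage;
\item \eqref{eq:beam} follows from beam-disjointness of the patterns;
\item \eqref{eq:direct-interference} follows from the same regrouping identity as in the first direction, applied in reverse.
\end{itemize}
Hence $(x,y)\in\Fone$.

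The main obstacle is this normalisation step, specifically checking monotonicity of all interference constraints under removal. This is exactly where $\Lambda\ge0$ and the inclusion of subsets and $\emptyset$ in $\Pall$ are used. The equality of optimal values is immediate once the projections coincide, since both objectives depend only on $x$.
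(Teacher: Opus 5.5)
Your proof is correct and takes the same approach as the paper: the forward direction is the same pattern-counting construction, and for the converse the paper also strips surplus carrier occurrences using $\Lambda_{ij}\ge 0$. The only difference is that the paper lays the patterns onto distinct sub-channels first and then deletes occurrences from the allocation, whereas you normalise $z$ within $\Pall$ before the layout; this makes explicit the closure of $\Pall$ under subsets and the termination at exact coverage, which the paper leaves implicit.
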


\begin{proof}
Given any arbitrary feasible solution $(x,y)$ of \Cref{mod:direct}. For each
sub-channel $k\in\K$, we define 
    $p_k=\{i\in\C:y_{ik}=1\}$.
Beam-disjointness in \Cref{mod:direct} implies that $p_k\in\Pall$. Let $z_p$ be
the number of sub-channels whose occupancy pattern is $p$. Then, we have that 
\begin{equation}
    \sum_{p\in\Pall} z_p\le S, 
\end{equation}
and that 
\begin{equation}
        \sum_{p\in\Pall_i} z_p
    =
    \sum_{k\in\K} y_{ik}
    =
    \Psi_i x_i, \quad \forall i \in \C
\end{equation} 
This gives 
\begin{equation}
      \sum_{p\in\Pall_i} q_{ip}z_p
    =
    \sum_{k\in\K}\sum_{j\in\C}
    \Lambda_{ij}y_{ik}y_{jk}
    \le \Gamma_i,   \quad \forall i \in \C
\end{equation} 
Hence $(x,z)$ is feasible for \Cref{mod:pattern}, with the same activation vector
$x$.

Conversely, let $(x,z)$ be feasible for \Cref{mod:pattern} over $\Pall$. Since
$\sum_{p\in\Pall} z_p\le S$, assign $z_p$ distinct sub-channels to each
pattern $p$. This yields a beam-disjoint allocation in which carrier
$i$ occupies
\[
    T_i=\sum_{p\in\Pall_i} z_p\ge \Psi_i x_i
\]
sub-channels. If $x_i=1$, retain any $\Psi_i$ of those sub-channels and
delete the remaining occurrences of carrier $i$. If $x_i=0$, delete all
of its occurrences. Let $y$ denote the resulting allocation. Then
\[
    \sum_{k\in\K} y_{ik}=\Psi_i x_i
    \qquad \forall i\in\C.
\]
Deleting carrier occurrences cannot create a beam conflict. It also
cannot increase interference, because all $\Lambda_{ij}$ are
nonnegative. Therefore, we have that
\begin{equation}
        \sum_{k\in\K}\sum_{j\in\C}
    \Lambda_{ij}y_{ik}y_{jk}
    \le
    \sum_{p\in\Pall_i} q_{ip}z_p
    \le \Gamma_i, 
    \quad \forall i\in\C.
\end{equation} 
Thus the resulting allocation is feasible for \Cref{mod:direct} and has the same
activation vector $x$.
The two feasible sets (of \Cref{mod:direct} and \Cref{mod:pattern} respectively) have the same projection onto the $x$-space. Since both objective functions depend only on $x$ and are identical, the two formulations produce the same optimal objective value.
\end{proof}

\subsection{Relaxations and Upper Bounds}\label{sec:bounds}

We develop three methods for computing an upper bound on the optimal objective value, each valid independently of the others. The minimum-overlap maximum-tolerance bound requires no optimisation solver and is available almost immediately. The beam-aggregation and reduced-cost bounds are computed by solving linear and mixed-integer programs, at greater cost and usually, though not always, with a tighter result. No method dominates across the instances tested (Table~\ref{tab:bound-source}), so the probe computes all three as time allows and stores the tightest bound obtained (Section~\ref{sec:probe}). The gap between this bound and the incumbent serves as both the optimality measure and the stopping criterion (Section~\ref{sec:repository}).

\subsubsection{Minimum-Overlap Maximum-Tolerance Bound}

We test whether $k$ carriers could be activated under optimistic assumptions, choosing their bandwidth demands, the carrier pairs that absorb their overlap, and their interference tolerances independently over all carriers. A failed test therefore shows that no set of $k$ carriers is feasible.

Let $\widetilde{\Psi}_1 \le \cdots \le \widetilde{\Psi}_{|\mathcal{C}|}$ denote the bandwidth demands in non-decreasing order. The minimum total demand of any $k$ active carriers is
\[
N_k = \sum_{\ell=1}^{k} \widetilde{\Psi}_\ell.
\]
We define an overlap unit as a pair of carriers sharing one sub-channel, and let
\[
a_k = \left\lfloor \frac{N_k}{S} \right\rfloor, \qquad r_k = N_k - a_k S.
\]
A sub-channel occupied by $m$ carriers contributes $\binom{m}{2}$ overlap units. Since $\binom{m}{2}$ is convex in $m$, the total is minimised by spreading the $N_k$ carrier--sub-channel allocations as evenly as possible over the $S$ sub-channels. This places $a_k+1$ carriers on $r_k$ sub-channels and $a_k$ on the remaining $S-r_k$, so the minimum number of overlap units is
\[
R_k = r_k \binom{a_k+1}{2} + (S - r_k) \binom{a_k}{2},
\]
which is zero whenever $N_k \le S$.

For a cross-beam carrier pair $\{i,j\}$, define the interference cost of one overlap unit between them and the largest overlap the pair can have as
\[
Q_{ij} = \Lambda_{ij} + \Lambda_{ji}, \qquad \kappa_{ij} = \min\{\Psi_i, \Psi_j\}.
\]
Charging the $R_k$ overlap units to cross-beam carrier pairs in non-decreasing order of $Q_{ij}$, with each pair $\{i,j\}$ taking at most $\kappa_{ij}$ units, gives a lower bound $\operatorname{Cost}(R_k)$ on the total interference among any $k$ active carriers.

Let $\widetilde{\Gamma}_1 \ge \cdots \ge \widetilde{\Gamma}_{|\mathcal{C}|}$ denote the interference tolerances in non-increasing order. If
\[
\operatorname{Cost}(R_k) > \sum_{\ell=1}^{k} \widetilde{\Gamma}_\ell,
\]
then no feasible solution activates $k$ carriers. Let $\bar{k}$ be the largest $k$ that passes this test. Every feasible solution activates at most $\bar{k}$ carriers, so the objective is at most the sum of the $\bar{k}$ largest carrier weights.

\subsubsection{Beam-Aggregation Bound}
\label{sec:beam-aggregation-bound}

The second bound rests on the overlap forced between pairs of beams. For any beam $b \in \mathcal{B}$, let
\[
N_b = \sum_{i \in \mathcal{C}_b} \Psi_i x_i
\]
denote its total occupied bandwidth. Since carriers within the same beam occupy disjoint sub-channels, $N_b$ is exactly the number of sub-channels occupied by beam~$b$. The occupied sub-channel sets of any two distinct beams $b$ and $c$ therefore overlap on at least
\[
\max\{0, N_b + N_c - S\}
\]
sub-channels. The bound is obtained by relaxing \Cref{mod:direct} and charging this overlap at its least possible interference cost.

For a carrier $i \in \mathcal{C}_b$ and a foreign beam $c \neq b$, let $O_{ic}$ denote the number of sub-channels carrier~$i$ shares with beam~$c$, and let
\[
\mu_{ic} = \min\left\{\Psi_i,\, \sum_{j \in \mathcal{C}_c} \Psi_j\right\}
\]
be the largest possible value of $O_{ic}$. For $0 \le o \le \mu_{ic}$, define
\[
\psi_{ic}(o) = \min\left\{ \sum_{j \in \mathcal{C}_c} \Lambda_{ij}\,\phi_j : \sum_{j \in \mathcal{C}_c} \phi_j = o,\; 0 \le \phi_j \le \Psi_j \right\}.
\]
In any allocation with $O_{ic} = o$, the numbers of sub-channels carrier~$i$ shares with each carrier of beam~$c$ form a feasible $\phi$, so carrier~$i$ receives at least $\psi_{ic}(o)$ interference from beam~$c$. The minimum is attained by filling $\phi_j$ up to $\Psi_j$ in non-decreasing order of $\Lambda_{ij}$, so $\psi_{ic}$ is convex and piecewise linear, with slopes $\Lambda_{ij}$ in that order.

Let $\mathcal{A} \subseteq \{(b,c) \in \mathcal{B} \times \mathcal{B} : b \neq c\}$ be a set of ordered beam pairs, closed under reversal, over which \Cref{mod:beam-agg} is stated.

\begin{model}\label{mod:beam-agg}
\begin{align}
\max_{x,N,O,t} \quad & \sum_{i \in \mathcal{C}} w_i\, x_i \label{eq:agg-obj}\\
\text{s.t.} \quad
& N_b = \sum_{i \in \mathcal{C}_b} \Psi_i x_i && \forall b \in \mathcal{B}, \label{eq:agg-demand}\\
& N_b \le S && \forall b \in \mathcal{B}, \label{eq:agg-cap}\\
& \sum_{i \in \mathcal{C}_b} O_{ic} \ge N_b + N_c - S && \forall (b,c) \in \mathcal{A}, \label{eq:agg-forced}\\
& 0 \le O_{ic} \le \mu_{ic} x_i && \forall i \in \mathcal{C}_b,\; (b,c) \in \mathcal{A}, \label{eq:agg-overlap}\\
& t_{ic} \ge \psi_{ic}(O_{ic}) && \forall i \in \mathcal{C}_b,\; (b,c) \in \mathcal{A}, \label{eq:agg-cost}\\
& \sum_{c : (b,c) \in \mathcal{A}} t_{ic} \le \Gamma_i x_i && \forall i \in \mathcal{C}_b,\; b \in \mathcal{B}, \label{eq:agg-budget}\\
& 0 \le x_i \le 1 && \forall i \in \mathcal{C}. \label{eq:agg-y}
\end{align}
\end{model}

Constraint~\eqref{eq:agg-forced} imposes the forced overlap $N_b + N_c - S$ for each pair in $\mathcal{A}$. Constraints~\eqref{eq:agg-overlap}--\eqref{eq:agg-cost} allow the overlap to be split among the carriers of beam~$b$, with each share charged through $\psi_{ic}$, and~\eqref{eq:agg-budget} bounds the total charge to each carrier by its interference tolerance. Because $\psi_{ic}$ is convex, it equals the maximum of its linear pieces, so~\eqref{eq:agg-cost} holds exactly when $t_{ic}$ is at least every piece. Replacing~\eqref{eq:agg-cost} with one linear inequality per piece therefore makes \Cref{mod:beam-agg} a linear program.

For any $\mathcal{A}$, \Cref{mod:beam-agg} relaxes \Cref{mod:direct}. Given a feasible solution of \Cref{mod:direct}, take each $O_{ic}$ as defined above and $t_{ic}$ as the interference carrier~$i$ receives from beam~$c$. These values satisfy every constraint of \Cref{mod:beam-agg}, since beams~$b$ and~$c$ overlap on at least $N_b + N_c - S$ sub-channels, $t_{ic} \ge \psi_{ic}(O_{ic})$, and the $t_{ic}$ sum to at most $\Gamma_i x_i$. The optimal value of \Cref{mod:beam-agg} is therefore an upper bound on that of \Cref{mod:direct}, and since adding pairs to $\mathcal{A}$ only adds constraints, a larger $\mathcal{A}$ never gives a looser bound.

The number of beam pairs grows quadratically with $|\mathcal{B}|$, so $\mathcal{A}$ is built by row generation. Only pairs with $\sum_{i \in \mathcal{C}_b} \Psi_i + \sum_{j \in \mathcal{C}_c} \Psi_j > S$ can force overlap, and those with the largest estimated cost of forced overlap form the initial set. After each solve, an omitted pair is violated when $N_b + N_c > S$ at the current solution, and the most costly violated pairs are added. Every intermediate solve gives a valid bound, and row generation converges when no omitted pair is violated. The integral form restores $x_i \in \{0,1\}$ over the converged pair set. It remains a relaxation of \Cref{mod:direct} and is never looser than the linear form, and the dual bound of the solver is valid even when the solve stops early.

\subsubsection{Reduced-Cost Bound}
\label{sec:reduced-cost-bound}

The third bound is derived from the LP relaxation of \Cref{mod:pattern}. Let $\mathcal{P}'$ denote the current restricted set of patterns, and let $\mathrm{LP}(\mathcal{P}')$ be the optimal value of the corresponding restricted master problem. Since $\mathcal{P}'$ may omit improving patterns, $\mathrm{LP}(\mathcal{P}')$ can fall below the LP optimum over $\Pall$, so a correction is needed to obtain an upper bound.

Let $\pi_i$, $\lambda_i$, and $\sigma$ denote the dual variables of~\eqref{eq:pat-demand}, \eqref{eq:pat-interf}, and~\eqref{eq:pat-slot}, respectively. For any pattern $p$, define
\[
\operatorname{val}(p) = \sum_{i \in p} \pi_i - \sum_{i,j \in p} \lambda_i \Lambda_{ij},
\qquad
\rho = \max_{p \in \Pall} \operatorname{val}(p).
\]
A pattern has positive reduced cost precisely when $\operatorname{val}(p) > \sigma$.

With exact pricing, $\rho \le \sigma$ would certify that no improving pattern has been omitted. The probe's column generation prices heuristically, so its termination alone does not provide this certificate. Instead, let $\rho_{ub}$ be any upper bound on $\rho$. Replacing $\sigma$ in the restricted-master dual solution by
\[
\sigma' = \max\{\sigma, \rho_{ub}\}
\]
gives a dual-feasible solution over $\Pall$, so by weak duality~\cite{lubbecke2005}
\[
v_{\mathrm{rc}} = \mathrm{LP}(\mathcal{P}') + S \max\{0, \rho_{ub} - \sigma\}
\]
is an upper bound on the LP optimum over $\Pall$, and hence on the optimal value of \Cref{mod:direct}.

An upper bound $\rho_{ub}$ can be obtained cheaply by dropping the non-negative interference penalties from $\operatorname{val}(p)$. Since a pattern contains at most one carrier from each beam,
\[
\rho \le \sum_{b \in \mathcal{B}} \max\left\{0, \max_{i \in \mathcal{C}_b} \pi_i\right\}.
\]
This value of $\rho_{ub}$ is available immediately. When time remains, the maximisation defining $\rho$ is also solved as a mixed-integer program. The solver's dual bound is an upper bound on $\rho$ whether or not the solve finishes, and equals $\rho$ to within the solver's optimality tolerance when it does. Since $v_{\mathrm{rc}}$ is non-decreasing in $\rho_{ub}$, using the smaller of the two values never loosens the bound.

\section{Solution Framework}
\label{sec:framework}

\subsection{Design rationale and overall algorithm}
\label{sec:framework-overview}

The proposed method is a cooperative portfolio in which heterogeneous solution generators exchange occupancy patterns as well as complete solutions. Each generator searches for feasible carrier allocations using its own representation and search mechanism. The patterns extracted from its returned allocation are added to a shared pool, irrespective of whether that allocation improves the incumbent. A restricted integer pattern-selection model then recombines the accumulated patterns, potentially producing an allocation that no individual generator found. The resulting incumbent is available to guide subsequent generator calls. Thus, the portfolio combines search across several methods with joint optimisation over the solution components they discover.

This cooperation takes advantage of the interchangeability of sub-channels under non-contiguous allocation. A pattern specifies the carriers occupying one sub-channel, but does not need a particular sub-channel index. Patterns obtained from different allocations can therefore be placed in a common pool and assigned new multiplicities during recombination. However, cumulative interference must still be enforced across all selected pattern multiplicities. The pattern-selection model \Cref{mod:pattern} provides this joint feasibility mechanism.

Alongside the incumbent and the pattern pool, the portfolio stores the tightest upper bound obtained at initialisation by the procedures of Section~\ref{sec:bounds}. These three items form the shared repository through which generation and assembly interact (Section~\ref{sec:repository}).

The search has three principal stages, as illustrated in Figure \ref{fig:portfolio-cycle}. 

\begin{figure}[pos=tbp]
\centering
\includegraphics[width=0.6\linewidth,height=0.5\textheight,keepaspectratio]{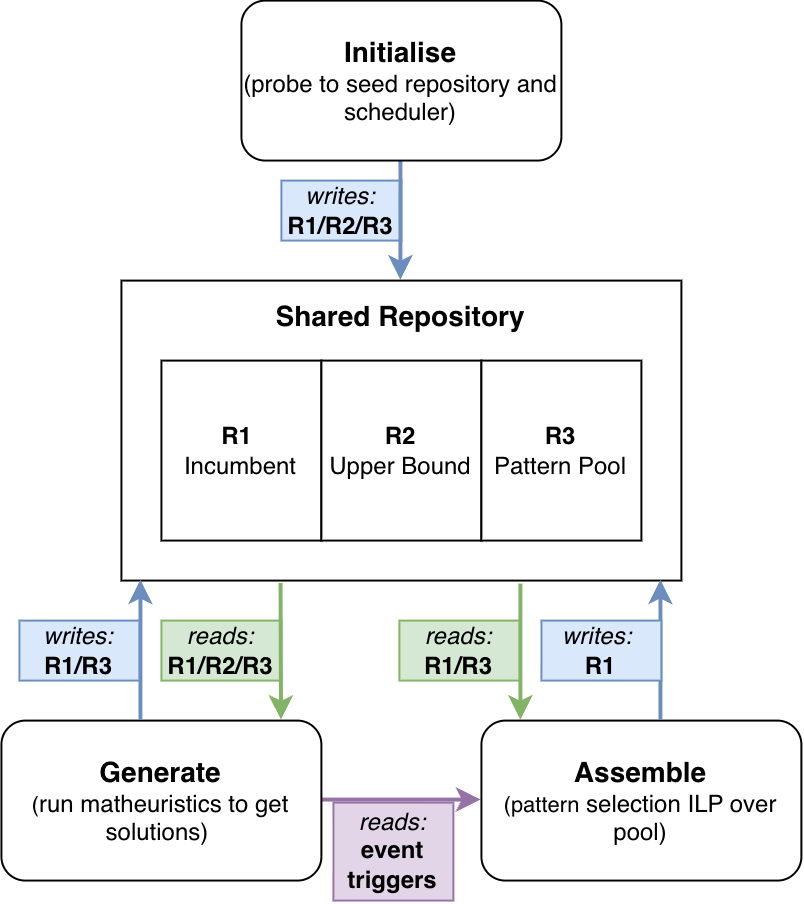}
\caption{Portfolio interactions}
\label{fig:portfolio-cycle}
\end{figure}

\begin{enumerate}
    \item \textbf{Initialisation.}
    A fast construction produces an initial feasible solution and hence an
    initial pattern pool. The upper-bounding procedures of
    Section~\ref{sec:bounds} are also invoked to provide information on the
    remaining optimality gap.

    \item \textbf{Generation.}
    A collection of complementary generators searches for improved
    feasible allocations. Any improvement replaces the incumbent, and the
    occupancy patterns of every returned allocation are added to the
    shared pool.

    \item \textbf{Assembly.}
    The pattern formulation of Section~\ref{subsec: pattern formulation} is solved over the
    accumulated and filtered pool. This allows patterns discovered independently by
    different methods to be combined into a new feasible solution.
\end{enumerate}

The incumbent is monotonically non-decreasing in objective value. Search terminates when the computational budget is exhausted, when no further improvement is obtained under the stopping rules described in Section~\ref{sec:coordinate}, or when the gap falls below one and optimality is certified. 

\subsection{Shared information and generator interface}
\label{sec:repository} 

The shared repository contains a feasible incumbent, a pattern pool, and a valid upper bound. For a feasible allocation $s=(x,y)$, define
\begin{equation}
 p_k(s)=\{i\in\mathcal C:y_{ik}=1\},
 \qquad
 \mathcal P(s)=\{p_k(s):k=1,\ldots,S\}.
 \label{eq:portfolio-extraction}
\end{equation}
By Proposition~\ref{prop:correspondence}, counting the sub-channels with each occupancy pattern gives a feasible pattern-model representation of $s$ with the same activation vector and objective value. Extraction only scans the returned allocation and never enumerates other patterns. The shared pool stores distinct pattern types, whereas recombination determines how often each type is used in the new allocation. 

\paragraph{Incumbent acceptance and pattern admission.}
Let $s^{\mathrm I}$ denote the incumbent and $f(s) = \sum_{i \in \mathcal{C}} w_i x_i$ the objective value of an allocation $s = (x,y)$. A returned allocation $s$ replaces $s^{\mathrm I}$ only if $f(s) > f(s^{\mathrm I})$. Its patterns enter the pool whether or not it replaces the incumbent,
\begin{equation}
\mathcal{P}^{\mathrm{pool}} \gets \mathcal{P}^{\mathrm{pool}} \cup \mathcal{P}(s),
\label{eq:portfolio-pool-update}
\end{equation}
so a lower-valued allocation can still contribute patterns to a later recombination. Assembly solves over a filtered subset of $\mathcal{P}^{\mathrm{pool}}$, but no pattern is ever deleted from the stored pool.

\paragraph{Upper bound.} 
During the probe, the repository keeps the tightest upper bound obtained from Section~\ref{sec:bounds}, starting from the trivial bound $\sum_{i\in\mathcal C}w_i$. Since weights are integral, each bound is rounded down before being stored. The resulting upper bound $U$ is fixed after the probe and is used throughout the remaining search to measure the gap and test for certified optimality. 

\subsection{Pattern-pool recombination}
\label{sec:assembly}

Recombination is the operation that assembles the generators' shared patterns into a joint allocation. It solves the integer pattern-selection model \eqref{eq:pat-obj}--\eqref{eq:pat-interf} over an available working set of patterns. Both carrier activation and pattern multiplicities are decisions of the optimisation problem. Bandwidth and cumulative-interference constraints are enforced jointly across the selected patterns. Consequently, recombination can combine components from different allocations without assuming that independently selected patterns are collectively feasible. Recombination runs once after the probe, whenever generation stalls (Section~\ref{sec:coordinate}), and once more at the end of the search unless optimality is already certified. 

\subsection{Search coordination and termination}
\label{sec:orchestration}

The scheduler determines which generators run and how the remaining time is distributed. It supports the pattern-sharing mechanism by balancing incumbent improvement, production of additional patterns, and opportunities for recombination. The scheduling decisions below use information from the initial probe and subsequent search progress, and all calls remain subject to the common repository rules of Section~\ref{sec:repository}. Section~\ref{sec:slice} lists the time allowances and thresholds.

\subsubsection{Initial probe}
\label{sec:probe}

The probe first runs FLIP+ (Section~\ref{sec:aux-generators}) to obtain a feasible incumbent and its occupancy patterns. It then computes the bounds of Section~\ref{sec:bounds} within its allowance, beginning with the minimum-overlap maximum-tolerance bound and the trivial sum-of-weights bound, and adds the patterns generated by its column generation to the pool. If the gap is below one after the diagnostic recombination, the run ends (Table~\ref{tab:gate}). The probe also records the instance's carrier count, which selects the schedule, and the fraction of carriers activated by the initial construction, which the near-saturation shortcut of Section~\ref{sec:coordinate} uses. 

\subsubsection{Diagnostic recombination and generator schedule}
\label{sec:schedule}

Recombination is then applied once to the probe's pool. Let $f_0$ and $f_1$ be the incumbent values immediately before and after this diagnostic call, and define its \emph{lift} as $f_1-f_0$. A closed gap terminates the run. Otherwise, a positive lift permits an additional column-generation phase before scheduling, whereas zero lift leads directly to the scheduler. The extension is skipped if pricing was already exhausted or insufficient time remains before the prescribed cutoff. When enabled, it resumes from the existing pool and stops when pricing is exhausted or that cutoff is reached.

Four solution generators appear in the schedules below (\Cref{sec:generators} describes these in detail).

\begin{itemize}
    \item \textbf{Carrier Shift and Activate Large Neighbourhood Search (CSA-LNS)}, an incumbent-repair large neighbourhood search.
    \item \textbf{Heuristic Column Generation (CG)}, which prices columns heuristically, falling back to exact pricing when none improves.
    \item \textbf{Targeted eviction heuristic}, which improves an existing incumbent by evicting carriers whose interference falls on nearly saturated carriers and refilling greedily.
    \item \textbf{FLIP-Constraint Program (CP)}, a multi-restart FLIP construction repaired by constraint programming.
\end{itemize}

Two further methods, FLIP+ and relax-and-repair CP, do not appear as scheduled entries. FLIP+, a deterministic FLIP construction with greedy top-up, supplies the probe's initial incumbent and the cold-start construction inside CSA-LNS (Section~\ref{sec:aux-generators}). Relax-and-repair CP is described in Section~\ref{sec:relax-repair}. Both also serve as standalone baselines against which the portfolio is compared.

\begin{table}[h]
\centering
\begin{tabular}{@{}lll@{}}
\toprule
Rule & Condition & Outcome \\
\midrule
G1 & gap $<1$ & stop; certified optimal \\
G2 & lift $\le 0$ & hand off to the scheduler \\
G3 & lift $>0$ & extend column generation, then hand off to the scheduler \\
\bottomrule
\end{tabular}
\caption{Gate rules, evaluated in order after the diagnostic assembly.}
\label{tab:gate}
\end{table}

Once handed off, the scheduler selects one of two schedules by instance size (Table~\ref{tab:schedules}). The schedules assign generators by role. CSA-LNS raises the number of active carriers. Targeted eviction and heuristic CG change which carriers are active, by ejection and refill or by column generation and pattern selection, and can raise the objective without changing the number of active carriers. FLIP-CP builds a fresh allocation without using the incumbent. On instances with at most $n_{\mathrm{cut}}$ carriers, the schedule cycles through CSA-LNS, heuristic CG and targeted eviction until the stall rules of Section~\ref{sec:coordinate} or the budget end the run. On larger instances, CSA-LNS runs repeatedly, and each CSA-LNS call that fails to raise the incumbent is followed by one reselection call, alternating targeted eviction and heuristic CG. The certificate of Section~\ref{sec:reduced-cost-bound} plays no part in this choice, and every call is subject to the rules of Section~\ref{sec:repository}.

\begin{table}[t]
\centering
\small
\renewcommand{\arraystretch}{1.15}
\begin{tabular}{@{}p{0.12\linewidth}
                  >{\raggedright\arraybackslash}p{0.18\linewidth}
                  >{\raggedright\arraybackslash}p{0.62\linewidth}@{}}
\toprule
Schedule & Instances & Generator calls \\
\midrule
Small & $|\mathcal C|\leq n_{\mathrm{cut}}$
   & Repeat CSA-LNS, heuristic CG and targeted eviction in turn. \\
Large & $|\mathcal C|>n_{\mathrm{cut}}$
   & Run CSA-LNS, then targeted eviction, then FLIP-CP only if the first CSA-LNS call did not raise the incumbent. Then repeat CSA-LNS, following each call that does not raise the incumbent with targeted eviction or heuristic CG, alternately. \\
\bottomrule
\end{tabular}
\caption{Generator schedules, selected by instance size.}
\label{tab:schedules}
\end{table}

\subsubsection{Progress, diversification, and stopping}
\label{sec:coordinate}

A generator call is unproductive when it raises neither the incumbent objective nor the LP relaxation value $\operatorname{LP}(\mathcal{P}^{\mathrm{pool}})$ of \Cref{mod:pattern} over the pool. The second measure credits a call that enlarges the pool's relaxation before it yields a better incumbent. A call can fail both tests and still add patterns that matter for the integer problem, so the classification serves only to control the search.

After $h_{\mathrm{stall}}$ consecutive unproductive calls, an assembly call is triggered. If it raises the incumbent, generation continues under the current schedule. Otherwise, later CSA-LNS calls start from a fresh construction instead of the incumbent (Section~\ref{sec:csa-entry}), and the count of unproductive calls is reset. If a triggered assembly call fails again after this switch, the search stops.

Certified termination occurs whenever $U-f(s^{\mathrm I})<1$. Since feasible objective values are integral and $U$ is a valid upper bound, no better feasible objective value can then exist. The implementation also uses a near-saturation shortcut. If the initial construction activates at least $99.5\%$ of carriers, the scheduler runs no generators and proceeds directly to the final assembly. This is a heuristic stopping condition, in particular, a high admission fraction need not imply a small remaining gap under non-uniform carrier weights.

In runs that reach the scheduler, a final assembly over the pool uses the time that remains once the schedule completes, the stall rules stop the search, or the unreserved budget runs out, and the incumbent is returned with $U$ and the remaining gap. If the gap certifies optimality during the search, the run ends without further generator or assembly calls. The diagnostic, stagnation-triggered, and final assembly calls use the same procedure (Section~\ref{sec:assembly}) and differ only in when they run and how much time they receive.

\subsection{Solution generators}
\label{sec:generators}

This subsection describes the portfolio's generators, beginning with the principal new one. CSA-LNS is presented first (Section~\ref{sec:csa-lns}), followed by heuristic CG (Section~\ref{sec:heuristic-CG}) and the auxiliary methods, FLIP+ and FLIP-CP from~\cite{Gaudry2026} and the targeted eviction heuristic (Section~\ref{sec:aux-generators}). Every generator's output passes through the greedy top-up of FLIP+ before it is returned, which can only add carriers to it.

\subsubsection{Carrier Shift and Activate Large Neighbourhood Search}
\label{sec:csa-lns}

CSA-LNS alternates between sub-channel reassignment and carrier activation, starting either from a fresh construction or from the incumbent after releasing the carriers that the pool's LP relaxation does not fully activate (Section~\ref{sec:csa-entry}). For an inactive carrier~$i$, define its \emph{headroom} $h_i$ as $\Gamma_i$ minus the smallest interference carrier~$i$ would receive from the active carriers on $\Psi_i$ sub-channels not occupied by its own beam, with $h_i = -\infty$ when fewer than $\Psi_i$ such sub-channels remain. A non-negative headroom is necessary for activating carrier~$i$ without altering the current allocation, but not sufficient, since the interference it causes must also stay within the active carriers' tolerances.

Each iteration samples activation candidates, favouring high headroom, and active carriers to reassign, favouring those that would receive the most interference from the candidates. The selected active carriers are reassigned to raise the smallest interference slack among the active carriers, and candidates whose headroom is then non-negative are inserted by a multi-start semi-greedy construction.

When iterations stop activating carriers, CSA-LNS applies a repair that solves a restriction of \Cref{mod:direct} by constraint programming over the carriers that block activations and the inactive carriers that could be activated alongside them, with the rest of the allocation fixed. Stalls that the repair does not resolve trigger a \emph{kick}, which deactivates the carriers that most obstruct inactive ones in the best solution found so far and resumes the search from there. Section~\ref{sec:csa-repair} gives the details.

\subsubsection{Heuristic column generation}
\label{sec:heuristic-CG}

Heuristic CG uses the pattern formulation directly as a primal search procedure. Starting from the single-carrier patterns, it alternates between solving the LP relaxation of the restricted master and pricing with its duals (Section~\ref{sec:reduced-cost-bound}), using a randomised heuristic and falling back to exact pricing when the heuristic finds no improving pattern. The resulting patterns, together with the incumbent's, are then passed to the integer pattern-selection model to produce a feasible allocation.

\subsubsection{Auxiliary generators}
\label{sec:aux-generators}

Three auxiliary methods complete the portfolio.

\paragraph{FLIP+.}
FLIP+ extends the FLIP construction of~\cite{Gaudry2026} in two ways. It starts from a fixed carrier, which makes the construction deterministic, and once FLIP's contiguous allocation is complete it passes the remaining inactive carriers to the non-contiguous activation procedure of CSA-LNS. FLIP+ supplies the probe's initial incumbent and the starting solution for cold-started CSA-LNS calls.

\paragraph{FLIP-CP.}
FLIP-CP is the flip--fix--solve method of~\cite{Gaudry2026}. Multiple FLIP restarts are generated, after which a constraint-programming repair is applied to the best construction. The repair returns a contiguous allocation, to which the greedy top-up may add carriers non-contiguously. Because its search works on contiguous placements, FLIP-CP is structurally different from the non-contiguous generators and adds diversity to the pattern pool.

\paragraph{Targeted eviction.}

The targeted eviction heuristic first applies the greedy top-up to the incumbent. It then repeatedly removes active carriers whose interference falls on nearly saturated carriers, refills the freed sub-channels with the same top-up, and keeps a result only if it improves the incumbent.

\section{Implementation details}\label{sec:implementation}

This section gives the settings and mechanisms behind the framework of Section~\ref{sec:framework}.

\subsection{Shared repository}\label{sec:impl-repository}

The Initialisation stage seeds the incumbent and the pool and sets $U$. During the Generation stage, each call receives the incumbent and $U$, and CSA-LNS also receives the activation vector of the LP relaxation of \Cref{mod:pattern} over the current pool, truncated to $4{,}000$ patterns and re-solved before each call. Each returned allocation updates the pool and, if it is better, the incumbent, under the rules of Section~\ref{sec:repository}. The Assembly stage reads the pool and the incumbent and writes only the incumbent.

\subsection{Probe and gate}\label{sec:impl-probe}

FLIP+ runs first, and the better of the minimum-overlap maximum-tolerance bound and the trivial bound is installed. Neither requires a solver. Two reserves are then withheld from the probe's allowance, one guaranteeing the reduced-cost bound a minimum slice and one for seeding the pool. The beam-aggregation bound runs against the allowance net of both reserves, and its integral form follows only if row generation converges (Section~\ref{sec:beam-aggregation-bound}) with time remaining. The reduced-cost bound then takes what is left. If it cannot run, a column-generation pass seeds the pool without attempting a certificate. Every bound is floored before it is compared or stored.

The diagnostic recombination follows the probe. Rule G1 ends the run if the gap is below one. Under rule G2 the scheduler starts if more than $10$ seconds remain. Under rule G3, column generation first resumes from the pool, unless the probe's heuristic pricing had already found no new column, and runs until $0.60T$ has elapsed, provided more than $2$ seconds remain before that point. The scheduler then starts if more than $10$ seconds remain, and otherwise a single assembly call uses the rest.

\subsection{Time allocation}\label{sec:slice}

Table~\ref{tab:portfolio-budget} lists the allowances and thresholds. Before the main schedule starts, at least $\tau_{\mathrm{end}}$ is reserved for the final assembly. Main-schedule calls and stagnation-triggered assembly calls draw only on the unreserved time, each up to its cap, and no call is launched once that time can no longer fund one of at least $5$ seconds.

\begin{table}[t]
\centering
\small
\renewcommand{\arraystretch}{1.15}
\begin{tabular}{@{}>{\raggedright\arraybackslash}p{0.34\linewidth}
                  >{\raggedright\arraybackslash}p{0.59\linewidth}@{}}
\toprule
Stage, generator or threshold & Allowance or value \\
\midrule
Initial probe & $30\%$ of $T$, at least $5$ seconds \\
Diagnostic recombination & $6\%$ of $T$, at least $3$ seconds \\
Column-generation extension (G3) & Until elapsed time reaches $0.60T$, only if more than $2$ seconds remain before that point \\
Scheduler start & More than $10$ seconds remaining \\
Final assembly reserve ($\tau_{\mathrm{end}}$) & At least $30$ seconds, extended to all time remaining when the generator loop stops \\
Minimum call length & $5$ seconds \\
\midrule
CSA-LNS & $45$ seconds per call \\
Targeted eviction & $45$ seconds per call \\
Heuristic CG & $20$ seconds per call \\
FLIP-CP & $30$ seconds per call \\
\midrule
Stall count ($h_{\mathrm{stall}}$) & $3$ consecutive unproductive calls (Section~\ref{sec:coordinate}) \\
Size threshold ($n_{\mathrm{cut}}$) & $100$ carriers (Table~\ref{tab:schedules}) \\
\bottomrule
\end{tabular}
\caption{Time allowances and thresholds. $T$ is the total wall-clock budget. Per-call caps are limited by the time remaining after the reservations for closing work.}
\label{tab:portfolio-budget}
\end{table}

\subsection{Assembly}\label{sec:impl-assembly}

Assembly first applies a reduced-cost filter that is provably safe. Using any dual solution of the pool's LP relaxation, it bounds the value of every solution that uses a given pattern and removes each pattern that cannot appear in a solution worth at least $f(s^{\mathrm I})+1$. If no solution can reach that value, the solve is skipped. When more than $600$ patterns remain, a second, heuristic filter keeps only the patterns whose reduced cost in the LP relaxation is zero to within a tolerance. At an LP optimum no pattern has positive reduced cost, so this keeps the patterns in the LP support together with any that tie them. The incumbent's patterns are then restored, and \Cref{mod:pattern} is solved over the result within the time the caller allows, warm-started from the incumbent. By the converse direction in the proof of Proposition~\ref{prop:correspondence}, any solution it returns gives a feasible allocation of equal value. The model is exact over its patterns, but under the time limit a returned solution need not be optimal over them.

\section{Generators}\label{sec:generator-details}

This section details the generators of Section~\ref{sec:generators}, together with relax-and-repair CP, which serves only as a standalone baseline.

\subsection{CSA-LNS}\label{sec:csa-details}

CSA-LNS works on an active set $\mathcal{C}'$ and, for each $i \in \mathcal{C}'$, the sub-channels $M_i = \{k \in \mathcal{K} : y_{ik} = 1\}$ it occupies, with headroom $h_i$ as defined in Section~\ref{sec:csa-lns}. It has four parts, Entry, the Improvement Loop, Repair and Exit.

\subsubsection{Entry}\label{sec:csa-entry}

A warm start keeps the incumbent's carriers whose value in the pool's LP relaxation exceeds $0.99$, releases the other active carriers, and refills with the greedy top-up of FLIP+ (Section~\ref{sec:flip-plus}). If no LP solution is available, the whole incumbent is kept and topped up. A cold start builds $\mathcal{C}'$ and $\{M_i\}$ with FLIP+, as the probe does. The orchestrator requests cold starts after a stagnation-triggered assembly fails to improve the incumbent (Section~\ref{sec:coordinate}).

\subsubsection{Improvement Loop}\label{sec:csa-loop}

Each pass first computes $h_i$ for every inactive carrier. If none has $h_i \ge 0$, the pass goes straight to Repair. Otherwise it samples a candidate set $\mathcal{C}'_3$ of up to half as many carriers as are inactive, drawn from those with finite headroom with weights increasing in $h_i$. It then samples a reassignment set $\mathcal{C}'_2$ of $15\%$ of the active carriers, at most $20$, with weights increasing in the interference each would receive from $\mathcal{C}'_3$. Each weight is divided by one plus the number of earlier passes of the call that selected the carrier. $\mathcal{C}'_2$ is reassigned to raise the smallest slack, $\Gamma_j$ minus received interference, over all of $\mathcal{C}'$.

The carriers of $\mathcal{C}'_3$ whose headroom is then non-negative are activated by a multi-restart semi-greedy construction. Its first restart visits them in order of weight, with ties broken by headroom, and every other restart samples from a restricted list of the top-ranked remaining candidates. A placement that would push an active carrier over its tolerance excludes the sub-channels involved and retries with the rest. The restart with the highest value is kept, with ties broken by lowest placement cost.

\begin{algorithm}[h]
\caption{CSA-LNS --- Improvement Loop, one pass}
\label{alg:csa-loop}
\begin{algorithmic}[1]
\State compute $h_i$ for every $i \in \mathcal{C} \setminus \mathcal{C}'$
\If{no $i \in \mathcal{C} \setminus \mathcal{C}'$ has $h_i \ge 0$} \Comment{stalled}
\State \textbf{go to} Repair (Section~\ref{sec:csa-repair})
\EndIf
\State sample $\mathcal{C}'_3 \subseteq \mathcal{C} \setminus \mathcal{C}'$ with finite headroom, weighted towards high $h_i$
\State sample $\mathcal{C}'_2 \subseteq \mathcal{C}'$, weighted towards interference received from $\mathcal{C}'_3$
\State reassign $\mathcal{C}'_2$ to raise the smallest slack over $\mathcal{C}'$
\State recompute $h_i$ for $i \in \mathcal{C}'_3$
\State activate $\{i \in \mathcal{C}'_3 : h_i \ge 0\}$ by multi-restart semi-greedy construction
\end{algorithmic}
\end{algorithm}

\subsubsection{Repair}\label{sec:csa-repair}

Repair is entered when six consecutive passes activate no carrier, or immediately when no inactive carrier has non-negative headroom. Its first mechanism is used at most once per call. It builds a pool around the active carriers that block the most inactive carriers, adds inactive carriers that could then be activated, and accepts each addition only while the degeneracy of the pool's induced interference graph stays at most six. \Cref{mod:direct} is then solved by constraint programming over the pool, for at most $10$ seconds, with activation and occupancy free for pool carriers and every other carrier fixed. A strictly improving result replaces the call's best solution and resets the count of stalled passes.

Otherwise a \emph{kick} follows in the same pass. Active carriers are ranked by how many inactive carriers each alone blocks, or, if none blocks alone, by its share of the blocking cost summed over the stuck carriers. Every identified carrier, up to $30\%$ of the active set, is removed from the call's best solution, skipping recently kicked carriers where others remain, and the Improvement Loop resumes from the result. CSA-LNS gives up after $15$ consecutive kicks without a new best solution, or when no carrier qualifies.

\begin{algorithm}[h]
\caption{CSA-LNS --- Repair}
\label{alg:csa-repair}
\begin{algorithmic}[1]
\If{the restricted solve is unused in this call}
\State $\mathcal{Q} \gets$ degeneracy-bounded pool of blockers and blocked carriers
\If{$\mathcal{Q} \neq \emptyset$ and the solve over $\mathcal{Q}$ strictly improves $\sum_i w_i x_i$}
\State update the best solution, reset the count of stalled passes
\State \textbf{resume} Improvement Loop (Section~\ref{sec:csa-loop})
\EndIf
\EndIf
\If{$15$ consecutive kicks have not improved the best solution}
\State \textbf{stop}
\EndIf
\State $\mathcal{D}_{\mathrm{kick}} \gets$ top-ranked blockers, at most $30\%$ of $\mathcal{C}'$
\If{$\mathcal{D}_{\mathrm{kick}} = \emptyset$}
\State \textbf{stop}
\EndIf
\State revert to the best solution and deactivate $\mathcal{D}_{\mathrm{kick}}$
\State \textbf{resume} Improvement Loop (Section~\ref{sec:csa-loop})
\end{algorithmic}
\end{algorithm}

\subsubsection{Exit}\label{sec:csa-exit}

CSA-LNS returns when its time allowance runs out, when the orchestrator's stall detector fires, when Repair stops, or when every carrier is active. Before returning, it builds a fresh pool around the best solution and solves the restricted model once more, for at most $30$ seconds and at most the time left in the allowance, with a minimum of one second. A call can therefore overrun its allowance by about one second plus the time to build the pool.

\subsection{Heuristic CG}\label{sec:cg-details}

Heuristic CG builds its own pattern set from the single-carrier patterns, independent of the shared pool. Each pricing step runs $12$ randomised greedy constructions with swap-and-drop local search and adds every distinct pattern with positive reduced cost. When none is found, the pricing problem is solved as a mixed-integer program over the carriers with $\pi_i > 0$, for at most $60$ seconds or the time left. Column generation stops when exact pricing also finds no improving pattern, when half of the call's budget is spent, or after $300$ iterations. The incumbent's patterns are then added, and \Cref{mod:pattern} is solved over the result within the remaining budget, warm-started from the incumbent.

\subsection{FLIP+}\label{sec:flip-plus}

FLIP~\cite{Gaudry2026} represents each carrier by a starting sub-channel and a contiguous run, and its only random element is the choice of starting carrier. FLIP+ always starts from the carrier with the largest ratio $\Psi_i/\Gamma_i$, which makes the construction deterministic. It then passes every carrier FLIP left inactive, once, to the semi-greedy construction of the Improvement Loop (Section~\ref{sec:csa-loop}), with FLIP's carriers fixed. That construction is not restricted to contiguous runs, so it can activate carriers FLIP cannot place.

\subsection{FLIP-CP}\label{sec:flip-cp}

FLIP-CP~\cite{Gaudry2026} restarts FLIP from random starting carriers for about $30\%$ of its call budget, at least $2$ seconds, and repairs the restart of highest weighted value once with the constraint program of~\cite{Gaudry2026}, restated in \Cref{mod:flip-cp}. Let $\tau_i$ be carrier~$i$'s starting sub-channel, so that an active carrier occupies sub-channels $\tau_i, \dots, \tau_i + \Psi_i - 1$, and let $\mathrm{o}_{ij} = \max\{0,\ \min(\tau_i+\Psi_i, \tau_j+\Psi_j) - \max(\tau_i, \tau_j)\}$ be the overlap between carriers $i$ and $j$. Let $\mathcal{F}$ be the set of carriers FLIP activated and $\bar{\tau}_i$ the starting sub-channel FLIP gave each $i \in \mathcal{F}$.

\begin{model}\label{mod:flip-cp}
\begin{align}
\max_{x,\tau} \quad & \sum_{i \in \mathcal{C}} w_i\, x_i \label{eq:flipcp-obj}\\
\text{s.t.} \quad
& x_i = 1, \quad \tau_i = \bar{\tau}_i
  & \forall i \in \mathcal{F} \label{eq:flipcp-fixed}\\
& 1 \;\le\; \tau_i \;\le\; S - \Psi_i + 1
  & \forall i \in \mathcal{C} \setminus \mathcal{F} \label{eq:flipcp-window}\\
& \texttt{NoOverlap}\big((\tau_i, \Psi_i, x_i)_{i \in \mathcal{C}_b}\big)
  & \forall b \in \mathcal{B} \label{eq:flipcp-nooverlap}\\
& x_i = 1 \;\Rightarrow\; \sum_{j \,:\, \{i,j\} \in \mathcal{I}} \Lambda_{ij}\, \mathrm{o}_{ij}\, x_j \;\le\; \Gamma_i
  & \forall i \in \mathcal{C} \label{eq:flipcp-budget}\\
& x_i \in \{0,1\}, \quad \tau_i \in \mathbb{Z}. \notag
\end{align}
\end{model}

Constraint~\eqref{eq:flipcp-fixed} keeps FLIP's allocation, so the program decides only whether each remaining carrier activates and where its contiguous run starts. Constraint~\eqref{eq:flipcp-nooverlap} forbids overlapping active carriers within a beam, and~\eqref{eq:flipcp-budget} bounds the interference each active carrier receives from the other active carriers. The implementation enforces~\eqref{eq:flipcp-nooverlap} through~\eqref{eq:flipcp-budget}, by giving same-beam pairs an interference coefficient that exceeds the receiving carrier's tolerance. The repaired allocation is contiguous, although the greedy top-up applied to every generator's output may add carriers non-contiguously. FLIP-CP reads neither the incumbent nor $U$.

\subsection{Targeted eviction heuristic}\label{sec:evict}

The heuristic first applies the greedy top-up to the incumbent and keeps the result if it improves. Each eviction round then scores every active carrier $a$ by $\sum_{j \in \mathcal{C}' \setminus \{a\}} \Lambda_{ja} |M_a \cap M_j| / \gamma_j$, where $\gamma_j$ is carrier~$j$'s remaining tolerance, so interference that lands on nearly saturated carriers counts most. A round makes up to four attempts. Each attempt draws $k$ carriers at random from the $\max\{k, 12\}$ highest-scoring ones, deactivates them, and refills with the greedy top-up, where $k$ is $15\%$ of the active carriers at invocation, rounded and bounded between $1$ and $20$. The round ends at the first attempt that improves the incumbent. The heuristic stops when a round fails, when fewer than two carriers are active, or when time runs out. When $k \ge 12$ the draw takes the whole candidate group, so the choice is random only on smaller instances.

\subsection{Relax-and-repair CP}\label{sec:relax-repair}

Relax-and-repair CP is a two-phase matheuristic. \citet{Gaudry2026} shows that an interval representation of the direct model improves tractability, and phase one extends that representation by allowing each carrier's interference tolerance to be exceeded by a fixed fraction. Phase two then repairs the violations with a restricted exact model.

Phase one places each carrier as a contiguous run of sub-channels. Let $\tau_i$ be the starting sub-channel of carrier~$i$ and $\ell_i = \Psi_i x_i$ the length of its run, so that carrier~$i$ occupies the half-open interval $[\tau_i, \tau_i + \ell_i)$ and an inactive carrier occupies none. The overlap of carriers $i$ and $j$ is $\mathrm{o}_{ij} = \max\{0,\ \min(\tau_i+\ell_i, \tau_j+\ell_j) - \max(\tau_i, \tau_j)\}$, which is zero whenever either carrier is inactive. Let $\mathcal{I}_F \subseteq \mathcal{I}$ be the pairs for which a single shared sub-channel already exceeds the true tolerance of one of the two carriers, $\Lambda_{ij} > \Gamma_i$ or $\Lambda_{ji} > \Gamma_j$. The excess $\delta_i$ of the interference received by carrier~$i$ over $\Gamma_i$ is capped at a fraction $\alpha = 0.3$ of $\Gamma_i$.

\begin{model}\label{mod:relax}
\begin{align}
\max \quad & \sum_{i \in \mathcal{C}} w_i\, x_i \label{eq:relax-obj}\\
\text{s.t.} \quad
& \ell_i = \Psi_i\, x_i, \quad 1 \;\le\; \tau_i \;\le\; S - \Psi_i + 1
  & \forall i \in \mathcal{C} \label{eq:relax-length}\\
& \mathrm{o}_{ij} = \max\bigl\{0,\ \min(\tau_i+\ell_i, \tau_j+\ell_j) - \max(\tau_i, \tau_j)\bigr\}
  & \forall \{i,j\} \in \mathcal{I} \label{eq:relax-overlap}\\
& \texttt{NoOverlap}\big((\tau_i, \ell_i)_{i \in \mathcal{C}_b}\big)
  & \forall b \in \mathcal{B} \label{eq:relax-beam}\\
& \texttt{NoOverlap}\big((\tau_i, \ell_i), (\tau_j, \ell_j)\big)
  & \forall \{i,j\} \in \mathcal{I}_F \label{eq:relax-forbid}\\
& \sum_{j \,:\, \{i,j\} \in \mathcal{I}} \Lambda_{ij}\, \mathrm{o}_{ij} \;\le\; \Gamma_i + \delta_i
  & \forall i \in \mathcal{C} \label{eq:relax-viol}\\
& 0 \;\le\; \delta_i \;\le\; \alpha \Gamma_i
  & \forall i \in \mathcal{C} \label{eq:relax-budget}\\
& x_i \in \{0,1\}, \quad \tau_i \in \mathbb{Z}, \quad \ell_i, \mathrm{o}_{ij} \ge 0. \notag
\end{align}
\end{model}

Constraint~\eqref{eq:relax-length} links activation to interference. An inactive carrier has an empty interval, so by~\eqref{eq:relax-overlap} it overlaps no other carrier, causes no interference to any carrier, and satisfies its own row of~\eqref{eq:relax-viol} trivially. Constraints~\eqref{eq:relax-beam} and~\eqref{eq:relax-forbid} prevent same-beam carriers and forbidden pairs from sharing a sub-channel, and~\eqref{eq:relax-viol} is the only constraint relaxed.

Phase one solves \Cref{mod:relax}. If its result violates no carrier's true tolerance, it is accepted directly. Otherwise only the violated carriers have their activation and sub-channels freed, and a restriction of \Cref{mod:direct}, as in CSA-LNS's repair (Section~\ref{sec:csa-repair}), maximises how many of them can be kept active feasibly, with every other carrier fixed where phase one placed it.

\section{Experimental Analysis}\label{sec:experiments}
This section reports how the portfolio and each of its generators perform on a common set of instances under a fixed time budget of 300 seconds.

All methods are implemented in Python and run on an Apple MacBook with an M2 chip. Exact models are solved with OR-Tools CP-SAT. CP-SAT runs with eight workers, except in relax-and-repair CP, which uses four. The versions used are OR-Tools 9.11.4210 and HiGHS (SciPy) 1.17.1. Linear relaxations and pricing subproblems are solved with HiGHS.

For every instance, the full portfolio and each generator alone are run three times, starting from the same probe-established incumbent, pool, and bound (Section~\ref{sec:probe}). We report the median weighted value over those three runs.

\subsection{Instances}\label{sec:instances}

Two instance families are taken directly from \citet{Gaudry2026}. The four realistic instances of 20, 40, 100, and 500 carriers, generated from representative user demand across the Australasia region and the HTS family comprising eight instances, one per colour of an eight-colour high-throughput GEO payload. The remaining twelve instances are synthetic, generated for this study at the same four sizes as the realistic family. Each size is generated at three interference hardness levels, sparse, moderate, and dense. Carriers are placed on a hexagonal beam layout and receive geometrically weighted randomly assigned interference levels.

Two objective types are used in the experimental analysis. The first applies uniform carrier weighting, so the objective equates to the raw active carrier count. The second assigns each carrier to one of five priority tiers with weights 16, 8, 4, 2 and 1. The carriers of each instance are shuffled with a fixed seed and assigned to the tiers in order of decreasing weight in proportions $10\%$, $32.5\%$, $32.5\%$, $15\%$ and $10\%$, so each instance always carries the same weights. Table~\ref{tab:instances} summarises the instances. Realistic instances are denoted \#-R, synthetic instances \#-S/M/D for sparse, moderate and dense interference, and the HTS instances HTS-1 to HTS-8.

\begin{table}[htbp]
\centering
\small
\begin{tabular*}{\linewidth}{@{\extracolsep{\fill}}lrrrrr@{}}
\toprule
\thead{ID} & \thead{$|\mathcal{C}|$} & \thead{$|\mathcal{B}|$} & \thead{$\sum_i \Psi_i / S$} & \thead{Median $\Psi_i$} & \thead{$r$} \\
\midrule
20-R & 20 & 6 & 3.67 & 28.0 & 0.04 \\
40-R & 40 & 8 & 4.42 & 16.0 & 1.68 \\
100-R & 100 & 17 & 9.21 & 13.5 & 0.07 \\
500-R & 500 & 95 & 55.02 & 12.0 & 0.31 \\
\midrule
20-S / M / D & 20 & 4 & 2.90 & 19.5 & 0.03 / 0.33 / 2.65 \\
40-S / M / D & 40 & 8 & 7.00 & 21.5 & 0.08 / 0.94 / 6.64 \\
100-S / M / D & 100 & 19 & 15.73 & 22.0 & 0.01 / 0.15 / 1.07 \\
500-S / M / D & 500 & 94 & 71.62 & 20.0 & 0.04 / 0.36 / 2.44 \\
\midrule
HTS-1 & 206 & 8 & 5.65 & 4.0 & 43.20 \\
HTS-2 & 170 & 6 & 4.58 & 4.0 & 10.96 \\
HTS-3 & 92 & 8 & 2.35 & 4.0 & 0.27 \\
HTS-4 & 96 & 6 & 3.18 & 5.0 & 1.35 \\
HTS-5 & 75 & 5 & 2.50 & 5.0 & 0.22 \\
HTS-6 & 123 & 5 & 3.23 & 4.0 & 1.53 \\
HTS-7 & 94 & 5 & 3.46 & 5.5 & 2.09 \\
HTS-8 & 144 & 7 & 4.41 & 5.0 & 9.48 \\
\bottomrule
\end{tabular*}
\caption{Instance characteristics. Every instance has $S = 200$ sub-channels, and the synthetic hardness levels of each size share carriers, beams and demands. The interference ratio $r$ is the median over carriers of $\sum_{j} \Lambda_{ij} / \Gamma_i$, the interference a carrier would receive relative to its tolerance if every other carrier shared one of its sub-channels.}
\label{tab:instances}
\end{table}

\subsection{Parameter settings}\label{sec:settings}

The settings in Table~\ref{tab:portfolio-budget} were fixed during development through targeted comparisons on the benchmark instances. A longer probe left less time for the generators and lowered the final result on the largest dense instance. Column generation resumed from the probe pool continues to add useful columns while its pricing is not yet exhausted, which is when rule G3 extends it. The gain from the diagnostic assembly is apparent within a few seconds, so its share was kept small to leave sufficient time for the generators. A final assembly reserve shorter than 30 s gave no improvement and lowered the worst runs on 500-R. The minimum call length excludes short calls, whose relative overrun is largest. Above $n_{\mathrm{cut}}$ column generation rarely converges and CSA-LNS continues to improve the incumbent over repeated calls, whereas below it every generator stops improving within two cycles of the schedule. A smaller stall count $h_{\mathrm{stall}}$ stops the search or triggers the warm-to-cold switch before the pool has diversified, and a larger count delays the switch beyond the budget. The per-call caps for heuristic CG and FLIP-CP were introduced because these methods overran their allowances on large instances and displaced CSA-LNS calls. Tuning stopped once further comparisons gave diminishing returns, and testing the sensitivity of these settings on a wider range of instances is left for future work.

The generator settings were chosen in the same way. CSA-LNS performs better when started from the incumbent than from a fresh construction. Before its search begins, it releases the incumbent carriers that the LP relaxation does not fully activate, which usually performed better than keeping every carrier. Releasing the same number of carriers at random performed comparably, so the gain comes from the release itself. The cost of reassigning carriers grows faster than the number reassigned, and the cap of 20 removes the resulting overrun on large instances without lowering the result. Limiting the repair pool by the degeneracy of its interference graph performed better than limiting its edge count, and matches an unlimited pool in a small fraction of the time. Repeating the exact repair after each kick added time without improvement in testing, so it runs once per call, while the exit solve occasionally finds a further gain. Ranking kick candidates by how many inactive carriers each blocks produced the largest gains, and the marginal criterion covers carriers stalled by many small contributions that no single active carrier causes. Near the activation ceiling, unlimited kicks consume the remaining budget without improvement, and the limit on consecutive unsuccessful kicks recovers that time. For targeted eviction, removing $15\%$ of the active carriers, capped at 20, outperformed both smaller and uncapped removal sizes, and scoring each carrier by the remaining tolerance of the carriers it interferes with predicted the gain of an eviction more closely than raw interference in testing. Several rules for choosing the FLIP+ starting carrier were compared, and the best, the largest ratio $\Psi_i/\Gamma_i$, was kept.

\subsection{Results}

Each instance was solved three times under each objective with a 300-second budget, and all values reported are medians over those runs. For comparison, each generator was also run alone after its own probe (Section~\ref{sec:probe}), with a 30-second allowance and a 15-second assembly, followed by the full 300-second budget, so a standalone run can take up to 45 seconds longer than the portfolio. CSA-LNS, targeted eviction and heuristic CG start from the probe incumbent, relax-and-repair CP returns it whenever its repair does not improve on it, and FLIP+ and FLIP-CP construct their solutions without it. The direct IP model was solved once per instance with CP-SAT under the same 300-second limit and without a warm start, and its reported times include building the model.

\subsubsection{Portfolio performance}

Tables~\ref{tab:portfolio-vs-generators-ip} and~\ref{tab:portfolio-vs-generators-tiered} report the portfolio's results per instance alongside the upper bound $U$. Its median objective reaches $U$, which proves optimality, on 11 of the 24 instances under the uniform objective and 8 under the priority-tiered objective. Its median objective lies within $5\%$ of $U$ on 15 and 17 instances, and the median optimality gap across instances is $2.6\%$ and $1.5\%$. Gaps above $10\%$ occur only on 100-D, 500-M and 500-D under both objectives and on HTS-1 under the uniform objective. Across the three runs, the best and median values differ on only four instances under the uniform objective and five under the priority-tiered objective, by at most $1.2\%$. The portfolio completes in $102.4$\,s and $113.2$\,s per instance on average, about a third of the 300-second budget under both objectives.

The realistic and HTS instances are shared with \citet{Gaudry2026}, so the uniform-objective results also indicate what removing the contiguity requirement gains. Against the best value reported by the contiguous methods of \citet{Gaudry2026} and \citet{Li2025}, the portfolio matches seven of the twelve instances and improves on the other five. All seven matches are proven optimal, so on these instances the contiguous solution is already optimal for the non-contiguous problem. On the realistic instances the gain is at most one carrier, while on HTS-1, HTS-8 and HTS-2 it reaches up to $7\%$, and these are the three instances with the highest interference ratio $r$ in the benchmark (Table~\ref{tab:instances}). The benefit of non-contiguous allocation is therefore concentrated on the instances where interference is densest.

\subsubsection{Generators and the virtual best solver}
\begin{table}[htbp]
\centering
\small
\setlength{\tabcolsep}{5pt}
\begin{tabular}{@{}lrrrrrrrrrr@{}}
\toprule
& & \multicolumn{2}{c}{\thead{Portfolio}} & \multicolumn{6}{c}{\thead{Generators run alone}} & \\
\cmidrule(lr){3-4} \cmidrule(lr){5-10}
\thead{ID} & \thead{$U$} & \thead{Obj.} & \thead{Time (s)} & \thead{CSA-\\LNS} & \thead{Evict.} & \thead{Relax-\\repair} & \thead{FLIP+} & \thead{Heur.\\CG} & \thead{FLIP-\\CP} & \thead{Direct\\IP} \\
\midrule
20-R & 18 & \textbf{18} & 0.10 & \textbf{18} & \textbf{18} & \textbf{18} & \textbf{18} & \textbf{18} & \textbf{18} & \textbf{18} \\
40-R & 35 & \textbf{35} & 17.4 & 34 & \textbf{35} & 34 & 23 & \textbf{35} & 32 & 32 \\
100-R & 99 & \textbf{99} & 8.60 & \textbf{99} & \textbf{99} & \textbf{99} & 96 & 98 & 98 & 95 \\
500-R & 487 & \textbf{457} & 269.3 & 453 & 453 & 448 & 448 & 448 & 447 & -- \\
\midrule
20-S & 20 & \textbf{20} & $<$0.05 & \textbf{20} & \textbf{20} & \textbf{20} & \textbf{20} & \textbf{20} & \textbf{20} & \textbf{20} \\
20-M & 18 & \textbf{17} & 1.50 & \textbf{17} & \textbf{17} & \textbf{17} & 14 & \textbf{17} & 16 & \textbf{17} \\
20-D & 15 & \textbf{15} & 0.30 & \textbf{15} & \textbf{15} & \textbf{15} & 14 & \textbf{15} & 14 & 14 \\
40-S & 38 & \textbf{37} & 37.4 & \textbf{37} & \textbf{37} & 36 & 33 & \textbf{37} & 34 & 34 \\
40-M & 30 & \textbf{27} & 40.9 & 26 & \textbf{27} & \textbf{27} & 22 & \textbf{27} & 21 & 24 \\
40-D & 22 & \textbf{20} & 24.8 & 19 & \textbf{20} & \textbf{20} & 15 & \textbf{20} & 15 & 19 \\
100-S & 100 & \textbf{100} & 23.5 & \textbf{100} & 99 & \textbf{100} & 99 & \textbf{100} & \textbf{100} & 98 \\
100-M & 89 & \textbf{83} & 161.5 & \textbf{83} & \textbf{83} & \textbf{83} & 78 & \textbf{83} & 76 & 57 \\
100-D & 82 & \textbf{64} & 144.7 & 61 & 61 & 61 & 58 & 61 & 57 & 46 \\
500-S & 493 & \textbf{480} & 270.7 & \textbf{480} & \textbf{480} & \textbf{480} & \textbf{480} & \textbf{480} & 479 & -- \\
500-M & 466 & \textbf{367} & 283.7 & 366 & 360 & 344 & 344 & 344 & 347 & -- \\
500-D & 424 & \textbf{228} & 300.1 & 226 & 206 & 188 & 188 & 189 & 191 & -- \\
\midrule
HTS-1 & 192 & \textbf{168} & 300.0 & 162 & 164 & 158 & 149 & 166 & 159 & -- \\
HTS-2 & 169 & 162 & 300.0 & 161 & 159 & 162 & 127 & \textbf{163} & 151 & -- \\
HTS-3 & 92 & \textbf{92} & 0.50 & \textbf{92} & \textbf{92} & \textbf{92} & \textbf{92} & \textbf{92} & \textbf{92} & \textbf{92} \\
HTS-4 & 94 & \textbf{94} & 3.60 & \textbf{94} & \textbf{94} & \textbf{94} & 93 & \textbf{94} & 93 & \textbf{94} \\
HTS-5 & 75 & \textbf{75} & 0.20 & \textbf{75} & \textbf{75} & \textbf{75} & \textbf{75} & \textbf{75} & \textbf{75} & \textbf{75} \\
HTS-6 & 123 & \textbf{123} & 1.50 & \textbf{123} & \textbf{123} & \textbf{123} & 120 & \textbf{123} & 121 & 119 \\
HTS-7 & 82 & \textbf{82} & 1.40 & \textbf{82} & \textbf{82} & \textbf{82} & 71 & \textbf{82} & 73 & 77 \\
HTS-8 & 131 & \textbf{127} & 265.2 & 126 & 126 & 126 & 109 & \textbf{127} & 109 & 106 \\
\midrule
\multicolumn{2}{@{}l}{Mean time (s)} & & 102.4 & 30.4 & 15.5 & 130.9 & 14.9 & 110.3 & 132.0 & 282.9 \\
\bottomrule
\end{tabular}
\caption{Uniform objective. Median objective of the portfolio, each generator run alone and the direct IP model, with a 300-second budget, and the portfolio's median runtime. Generator times include the probe of the standalone run. The portfolio and the generators are summarised over three runs and the direct IP model is a single run. The best value in each row is in bold, and \texttt{--} marks an instance the direct IP model was unable to solve within the budget.}
\label{tab:portfolio-vs-generators-ip}
\label{tab:portfolio-vs-directip-full}
\end{table}

\begin{table}[htbp]
\centering
\small
\setlength{\tabcolsep}{5pt}
\begin{tabular}{@{}lrrrrrrrrrr@{}}
\toprule
& & \multicolumn{2}{c}{\thead{Portfolio}} & \multicolumn{6}{c}{\thead{Generators run alone}} & \\
\cmidrule(lr){3-4} \cmidrule(lr){5-10}
\thead{ID} & \thead{$U$} & \thead{Obj.} & \thead{Time (s)} & \thead{CSA-\\LNS} & \thead{Evict.} & \thead{Relax-\\repair} & \thead{FLIP+} & \thead{Heur.\\CG} & \thead{FLIP-\\CP} & \thead{Direct\\IP} \\
\midrule
20-R & 114 & \textbf{114} & 0.10 & \textbf{114} & \textbf{114} & \textbf{114} & \textbf{114} & \textbf{114} & \textbf{114} & 112 \\
40-R & 220 & \textbf{211} & 22.3 & 210 & \textbf{211} & \textbf{211} & 124 & \textbf{211} & 184 & 188 \\
100-R & 590 & 588 & 67.9 & 588 & 588 & 587 & 566 & \textbf{589} & 572 & 533 \\
500-R & 2910 & \textbf{2757} & 280.6 & 2745 & 2717 & 2655 & 2655 & 2655 & 2680 & -- \\
\midrule
20-S & 120 & \textbf{120} & $<$0.05 & \textbf{120} & \textbf{120} & \textbf{120} & \textbf{120} & \textbf{120} & \textbf{120} & \textbf{120} \\
20-M & 110 & \textbf{107} & 1.80 & 105 & \textbf{107} & \textbf{107} & 85 & \textbf{107} & 101 & 102 \\
20-D & 96 & \textbf{95} & 1.10 & \textbf{95} & \textbf{95} & \textbf{95} & 77 & \textbf{95} & 81 & 85 \\
40-S & 226 & \textbf{225} & 74.3 & \textbf{225} & \textbf{225} & \textbf{225} & 205 & \textbf{225} & 213 & 214 \\
40-M & 187 & \textbf{174} & 101.7 & 170 & 170 & 170 & 148 & \textbf{174} & 140 & 156 \\
40-D & 163 & \textbf{158} & 15.3 & 150 & 154 & 156 & 108 & \textbf{158} & 124 & 90 \\
100-S & 592 & \textbf{592} & 23.2 & \textbf{592} & 588 & \textbf{592} & 588 & 588 & \textbf{592} & 588 \\
100-M & 555 & \textbf{512} & 270.9 & 506 & 505 & 494 & 472 & 504 & 464 & 409 \\
100-D & 529 & \textbf{415} & 178.0 & 414 & 410 & 407 & 386 & 410 & 364 & 227 \\
500-S & 2927 & \textbf{2830} & 271.2 & 2810 & 2798 & 2798 & 2798 & 2798 & 2810 & -- \\
500-M & 2796 & \textbf{2268} & 273.5 & 2203 & 2183 & 2012 & 2012 & 2013 & 2054 & -- \\
500-D & 2671 & \textbf{1388} & 300.1 & 1320 & 1306 & 1237 & 1127 & 1241 & 1170 & -- \\
\midrule
HTS-1 & 1185 & \textbf{1105} & 300.1 & 1079 & 1083 & 1087 & 869 & 1098 & 924 & -- \\
HTS-2 & 999 & 985 & 300.0 & 981 & 980 & 980 & 828 & \textbf{986} & 932 & -- \\
HTS-3 & 541 & \textbf{541} & 0.50 & \textbf{541} & \textbf{541} & \textbf{541} & \textbf{541} & \textbf{541} & \textbf{541} & \textbf{541} \\
HTS-4 & 568 & \textbf{568} & 3.90 & \textbf{568} & 567 & \textbf{568} & 556 & \textbf{568} & 566 & 556 \\
HTS-5 & 446 & \textbf{446} & 0.20 & \textbf{446} & \textbf{446} & \textbf{446} & \textbf{446} & \textbf{446} & \textbf{446} & \textbf{446} \\
HTS-6 & 728 & \textbf{728} & 3.20 & \textbf{728} & \textbf{728} & \textbf{728} & 719 & \textbf{728} & 720 & 718 \\
HTS-7 & 508 & \textbf{508} & 2.10 & \textbf{508} & \textbf{508} & \textbf{508} & 419 & \textbf{508} & 399 & 477 \\
HTS-8 & 809 & \textbf{796} & 223.8 & 794 & 790 & 786 & 619 & \textbf{796} & 613 & 702 \\
\midrule
\multicolumn{2}{@{}l}{Mean time (s)} & & 113.2 & 34.5 & 15.9 & 149.5 & 15.2 & 109.8 & 134.8 & 284.1 \\
\bottomrule
\end{tabular}
\caption{Priority-tiered objective. Median objective of the portfolio, each generator run alone and the direct IP model, with a 300-second budget, and the portfolio's median runtime. Generator times include the probe of the standalone run. The portfolio and the generators are summarised over three runs and the direct IP model is a single run. The best value in each row is in bold, and \texttt{--} marks an instance the direct IP model was unable to solve within the budget.}
\label{tab:portfolio-vs-generators-tiered}
\end{table}

\begin{table}[htbp]
\centering
\begin{tabular*}{\linewidth}{@{\extracolsep{\fill}}l|cc|cc@{}}
\toprule
& \multicolumn{2}{c|}{\thead{Portfolio wins / ties / losses}} & \multicolumn{2}{c}{\thead{Mean gap to best known}} \\
\thead{Method} & \thead{Uniform} & \thead{Priority-tiered} & \thead{Uniform} & \thead{Priority-tiered} \\
\midrule
Portfolio & -- & -- & $0.03\%$ & $0.01\%$ \\
CSA-LNS & 10 / 14 / 0 & 13 / 11 / 0 & $0.99\%$ & $0.97\%$ \\
Targeted eviction & 8 / 16 / 0 & 13 / 11 / 0 & $0.99\%$ & $1.00\%$ \\
Relax-and-repair CP & 8 / 16 / 0 & 12 / 12 / 0 & $1.81\%$ & $1.66\%$ \\
FLIP+ & 19 / 5 / 0 & 20 / 4 / 0 & $9.28\%$ & $11.29\%$ \\
Heuristic CG & 6 / 17 / 1 & 8 / 14 / 2 & $1.34\%$ & $1.28\%$ \\
FLIP-CP & 19 / 5 / 0 & 19 / 5 / 0 & $6.73\%$ & $8.35\%$ \\
Direct IP & 12 / 6 / 0 & 15 / 3 / 0 & $7.27\%$ & $10.17\%$ \\
\midrule
Virtual best solver & 5 / 18 / 1 & 7 / 15 / 2 & $0.33\%$ & $0.46\%$ \\
\bottomrule
\end{tabular*}
\caption{Portfolio against each generator run alone, the direct IP model and the virtual best solver, which takes the best generator median on each instance. Counts compare medians per instance, and the direct IP model is compared on the 18 instances where it returned a solution. The gap to the best known value, the largest median of any method on the instance, is averaged over instances.}
\label{tab:portfolio-vs-generators-stats}
\end{table}

Run alone, the generators show the strengths that motivate combining them (Table~\ref{tab:portfolio-vs-generators-stats}). CSA-LNS and targeted eviction come within about $1\%$ of the best known value on average, heuristic CG and relax-and-repair CP within $2\%$, and the two FLIP constructions, which trade quality for speed, within $7$ to $11\%$. The direct IP model, which has no warm start, matches the portfolio on only 6 of its 18 instances under the uniform objective and 3 under the priority-tiered objective. It was unable to solve HTS-1, HTS-2 or the 500-carrier instances within the budget.

The portfolio matches or exceeds every generator on almost every instance. It loses to a generator on three instance--objective pairs, each by one unit and at most $0.6\%$, all to heuristic CG. These are HTS-2 under both objectives, 163 against 162 and 986 against 985, and 100-R under the priority-tiered objective, 589 against 588. A one-sided sign test on the per-instance wins and losses, Holm-corrected across the fourteen comparisons, confirms the portfolio's advantage at the $5\%$ level in every comparison except heuristic CG, with six wins and one loss under the uniform objective and eight wins and two losses under the priority-tiered objective.

The virtual best solver is a stronger reference, since it selects the best generator on each instance with hindsight. The portfolio still exceeds it on 5 instances under the uniform objective and 7 under the priority-tiered objective, by up to $4.9\%$ and $5.2\%$, and falls short only on the three pairs above. Its gains come on the larger and denser instances, 100-D, 500-R, 500-M, 500-D and HTS-1 under both objectives, and also 100-M and 500-S under the priority-tiered objective. On all but one of these the returned incumbent comes mainly from a CSA-LNS call inside the portfolio, which starts from the shared incumbent after earlier calls and assemblies have had the chance to improve it, and exceeds what CSA-LNS reaches alone from the probe's incumbent. The exception is 500-R under the uniform objective, where targeted eviction sets it in two of the three runs.

Under the uniform objective, assembly sets the incumbent the portfolio returns on $47\%$ of the instances, CSA-LNS on $29\%$, FLIP+ on $21\%$ and targeted eviction on $3\%$, with each instance divided equally among its three runs. Under the priority-tiered objective, CSA-LNS sets it on $42\%$, assembly on $33\%$, FLIP+ on $17\%$ and heuristic CG on $8\%$. FLIP+ accounts for the four instances under each objective that the probe certifies optimal, and for 500-S under the uniform objective, where no generator raises it within the budget. FLIP-CP raises the incumbent only on 500-S under the priority-tiered objective, where it improves on FLIP+ in every run before CSA-LNS raises it further.

\subsubsection{Portfolio mechanisms}

Orchestration determines how much of the budget the portfolio uses. Obtaining the virtual best solver requires running all six generators, which takes $433.9$\,s per instance on average under the uniform objective and $459.8$\,s under the priority-tiered objective. The portfolio needs $102.4$\,s and $113.2$\,s, which is $4.2$ and $4.1$ times less, and it is faster on every instance. These generator times include one probe per generator run. Subtracting the full 45-second probe allowance from every generator run, and counting a run shorter than the allowance as zero, still leaves the virtual best solver $2.7$ times slower under both objectives. This overstates the probe cost, since FLIP+ run alone, which is little more than the probe, takes $14.9$\,s on average.

The saving comes from stopping early. The gate ends the run once the gap closes, which covers the proven-optimal instances, and the stall rules stop the generators once further calls stop adding value, so 15 instances under the uniform objective and 12 under the priority-tiered objective finish within a minute. The instances that use the full budget are 500-D, HTS-1 and HTS-2, under both objectives and all with open gaps.

A second comparison isolates the orchestration rules themselves. Two fixed schedules keep the generators, shared pool and closing assembly of the portfolio but remove the probe, the gate, the intermediate assemblies, the stopping rules and the per-call caps of Table~\ref{tab:portfolio-budget}. Both start from the FLIP+ incumbent and give every call the same 45-second slice until only the 30-second reserve for the closing assembly remains. Every call is warm-started from the incumbent, and CSA-LNS keeps its in-call stall detector. The static schedule cycles through CSA-LNS, targeted eviction, heuristic CG and FLIP-CP in a fixed order, and the random schedule draws the generator of each call uniformly at random from the same four, which are the generators the role schedules call.

\begin{table}[htbp]
\centering
\begin{tabular*}{\linewidth}{@{\extracolsep{\fill}}ll|c|ccc|c@{}}
\toprule
& & \thead{Mean\\time (s)} & \thead{Mean\\regret} & \thead{Max\\regret} & \thead{Regret\\$>1\%$} & \thead{Orchestrated\\W / T / L} \\
\midrule
Uniform & Orchestrated & $102.4$ & $0.10\%$ & $0.78\%$ & 0 & -- \\
 & Static & \multirow{2}{*}{$274.7$} & $0.36\%$ & $3.12\%$ & 4 & 4 / 5 / 1 \\
 & Random & & $0.39\%$ & $2.41\%$ & 5 & 2 / 7 / 1 \\
\midrule
Priority-tiered & Orchestrated & $113.2$ & $0.11\%$ & $1.07\%$ & 1 & -- \\
 & Static & \multirow{2}{*}{$277.1$} & $0.28\%$ & $2.73\%$ & 3 & 3 / 5 / 3 \\
 & Random & & $0.40\%$ & $2.99\%$ & 3 & 2 / 9 / 0 \\
\bottomrule
\end{tabular*}
\caption{The orchestrated portfolio against two fixed schedules that use the same generators, pool and closing assembly without any orchestration rule. Mean time is the mean over instances of the median runtime, and the time for the fixed schedules is their mean over both schedules. Regret is the shortfall of a median from the best median of any method on the instance, as a percentage of that value. W / T / L counts the wins, ties and losses of the orchestrated portfolio beyond run-to-run variation, on the 10 uniform and 11 priority-tiered instances where the three portfolios do not all return the same value in every run.}
\label{tab:orchestration-robustness}
\end{table}

Without orchestration, the same components use $274.7$\,s and $277.1$\,s per instance on average, so the portfolio needs on average $37\%$ and $41\%$ of their compute under the two objectives (Table~\ref{tab:orchestration-robustness}). Its solutions are also better on average. Its mean regret is $0.10\%$ against $0.36\%$ and $0.39\%$ under the uniform objective, and $0.11\%$ against $0.28\%$ and $0.40\%$ under the priority-tiered objective. Its worst case is $0.78\%$ and $1.07\%$, where the fixed schedules reach $2.4$ to $3.1\%$. On the instances where the three portfolios differ, the orchestrated portfolio wins more often than it loses in three of the four comparisons and wins and loses three each against the static schedule under the priority-tiered objective. These instances are too few for a sign test.

The three losses to the static schedule are on 500-D, 500-R and HTS-8 under the priority-tiered objective, by $1.1\%$, $1.0\%$ and $0.3\%$ of the best known value. On 500-D and 500-R the probe runs to its time limit without closing the gap, and on 500-D rule G3 then extends column generation. The generators therefore have about 90\,s and 180\,s of the budget, against about 270\,s under the static schedule, which computes no bounds. The largest wins are on 100-M and 500-M under the same objective, by $2.7\%$ and $1.9\%$. There the first CSA-LNS call of the portfolio starts from the incumbent and pool left by the probe and reaches a higher value than the first CSA-LNS call of the static schedule.

\begin{table}[htbp]
\centering
\begin{tabular*}{\linewidth}{@{\extracolsep{\fill}}lccc@{}}
\toprule
\thead{Winning bound} & \thead{Cases} & \thead{Median\\improvement} & \thead{Max\\improvement} \\
\midrule
Reduced-cost & 9 & $20.8\%$ & $45.0\%$ \\
Beam-aggregation & 28 & $5.8\%$ & $18.0\%$ \\
Minimum-overlap maximum-tolerance & 1 & $15.2\%$ & $15.2\%$ \\
Trivial & 10 & -- & -- \\
\midrule
All & 48 & $5.0\%$ & $45.0\%$ \\
\bottomrule
\end{tabular*}
\caption{Bound that sets $U$ on each instance under each objective, and its improvement on the trivial bound $\sum_{i\in\mathcal C}w_i$.}
\label{tab:bound-source}
\end{table}

The proven upper bound improves on the trivial bound on 38 of the 48 instance--objective pairs (Table~\ref{tab:bound-source}). The beam-aggregation bound sets $U$ most often, on 28 pairs, and the reduced-cost bound sets it on 9 pairs, with the largest improvements, $20.8\%$ at the median. The minimum-overlap maximum-tolerance bound sets $U$ once, on 500-D under the uniform objective. The trivial bound stands on the ten pairs where the returned incumbent activates every carrier, so no constraint binds.

\subsection{Discussion}\label{sec:exp-discussion}

The portfolio returns solutions at least as good as the best individual generator on 23 of the 24 instances under the uniform objective and 22 under the priority-tiered objective, while using roughly a quarter of the compute needed to identify that generator. The fixed schedules show that the stopping rules account for most of the saving, since the same components run to the budget without them use two to three times the compute and return slightly worse solutions on average. That comparison removes every orchestration rule at once, so it does not attribute the difference in solution quality to any single rule. The portfolio removes the need to know in advance which generator suits an instance, and it reaches the quality of the virtual best solver without the cost of running every generator. The same comparison shows what the probe costs on the largest instances. Where its bounds and columns leave the gap open, as on 500-D and 500-R under the priority-tiered objective, the probe leaves the generators less time, and the static schedule ends about $1\%$ higher. A probe allowance that adapts to the progress of column generation could return that time to the generators.

The gains over the virtual best solver show that the portfolio does more than select a generator. They appear on the large and dense instances, where no single generator dominates. On all but one of these, the returned incumbent comes mainly from a CSA-LNS call that starts from an incumbent refined by earlier calls and assemblies and exceeds what CSA-LNS reaches alone. Assembly sets $47\%$ and $33\%$ of the returned incumbents under the two objectives. These shares record which component produced each incumbent, and no experiment runs the portfolio without the shared pool, so they do not measure the contribution of the pool on its own. CSA-LNS carries most of the 100- and 500-carrier instances and the two largest HTS instances, assembly most of the smaller instances, and FLIP+ settles the easy ones almost immediately, so the orchestration can stop early wherever the probe and the first calls already close the gap.

The upper bounds make that early stopping possible. They certify optimality on 11 and 8 instances. On 5 of these under each objective every carrier is admitted and the trivial bound suffices, so the bounds of Section~\ref{sec:bounds} decide optimality on 6 and 3, and the portfolio stops as soon as it finds an optimal solution.

\section{Conclusion}\label{sec:conclusions}

We formulated non-contiguous channel allocation, showed it NP-hard by reduction from maximum independent set (Section~\ref{sec:formulation}), and developed a pattern-based reformulation, three upper-bounding methods and a cooperative portfolio of matheuristics. The portfolio and each generator were run three times on 24 instances in three families, realistic, synthetic and HTS, under a uniform and a priority-tiered objective, with the instance as the unit of analysis. The results support the following conclusions.

The portfolio matches or exceeds the best individual generator on 23 of the 24 instances under the uniform objective and 22 under the priority-tiered objective, while using roughly a quarter of the compute needed to run every generator and select the best. It exceeds the virtual best solver on 5 and 7 instances, all large or dense, and its median reaches the proven upper bound on 11 and 8 instances, of which 6 and 3 require carriers to be rejected (Section~\ref{sec:exp-discussion}).

Run alone, CSA-LNS and targeted eviction come within about $1\%$ of the best known value on average. Within the portfolio, assembly and CSA-LNS together set about three quarters of the returned incumbents under each objective, assembly more often under the uniform objective and CSA-LNS under the priority-tiered objective. The large-instance schedule gives CSA-LNS most of its calls, so its share also reflects how often it runs.

Across the 18 instances it solved, the portfolio matches or exceeds the direct model in all 36 instance--objective comparisons, with 27 wins and 9 ties (Tables~\ref{tab:portfolio-vs-generators-ip} and~\ref{tab:portfolio-vs-generators-tiered}). The difference grows with size. At 20 carriers the direct model ties the portfolio on 4 of the 8 instance--objective pairs, at 100 carriers it returns as little as $55\%$ of the portfolio's value (100-D under the priority-tiered objective, 227 against 415), and it was unable to solve any of the six instances with 170 or more carriers within the budget. The model carries $O(|\mathcal{I}|\,S)$ auxiliary variables and three times as many constraints, and it received no warm start. Model size and the weak McCormick relaxation (Section~\ref{sec:formulation}) may both contribute, and the experiments do not separate them. The pattern-based reformulation avoids these bilinear terms, and the column-generation and restricted-master methods of this paper build on it.

No single upper bound dominates, which is why the portfolio computes all three and keeps the tightest. The proven bound improves on the trivial bound on 38 of the 48 instance--objective pairs (Table~\ref{tab:bound-source}). The beam-aggregation bound sets it on 28 pairs. The reduced-cost bound sets it on 9, with the largest median improvement, $20.8\%$. It is valid for any restricted pattern set, but its correction term grows with the excess of the pricing bound over $\sigma$, so it is tight only when column generation has nearly converged.

Relaxing contiguity cannot reduce the optimal value, because every contiguous allocation remains feasible in \Cref{mod:direct} (Section~\ref{sec:formulation}). On the twelve instances shared with \citet{Gaudry2026}, the portfolio matches the best contiguous result on seven, all proven optimal, and improves on the other five, by up to $7\%$ on the instances with the densest interference.

\subsection{Future Work}

This paper addresses sub-channel allocation and carrier admission on their own, holding beam laydown, power allocation, and the illumination schedule fixed (Section~\ref{sec1}). Three extensions follow directly from that scope.

Beam-hopped systems provision capacity in the time domain through a repeating illumination schedule, held fixed throughout this work. Interference-aware channel allocation could instead be integrated with time-slice allocation, so that the two are planned jointly. Power and bandwidth allocation were taken as given inputs to this problem (Section~\ref{sec1}), jointly with each other but separately from channelisation, as is common in the literature. Embedding the methods developed here within a larger allocation loop that also revises power and bandwidth would relax that separation.

The portfolio settings were chosen through comparisons on the benchmark instances. Testing their sensitivity on a wider range of instances would show how far the settings transfer. FLIP-CP (Section~\ref{sec:flip-cp}) shows that contiguous placements can be searched within this framework. A two-stage scheme that allocates contiguously by default and falls back to non-contiguous placement only where required would trade some solution quality for simpler digital channelisation, and suits payloads where fragmentation carries a real implementation cost.

\section*{Declaration of competing interest}

\confirm{The authors declare that they have no known competing financial interests or personal relationships that could have appeared to influence the work reported in this paper.}

\section*{Funding}

\confirm{This research did not receive any specific grant from funding agencies in the public, commercial, or not-for-profit sectors.}

\section*{Data availability}

\confirm{The instances and code used in this study are available on \href{https://github.com/AngusGaudry/Non_Contiguous_Channel_Allocation_Paper}{GitHub}.}

\section*{Declaration of generative AI and AI-assisted technologies in the manuscript preparation process}

\confirm{During the preparation of this work the authors used Claude (Anthropic) in order to assist with code development and debugging, scripting the statistical analysis and tables, and language editing. After using this tool, the authors reviewed and edited the content as needed and take full responsibility for the content of the published article.}

\printcredits

\bibliographystyle{cas-model2-names}
\bibliography{cas-refs}

\end{document}